\documentclass[a4paper,11pt]{amsart}
\usepackage[utf8]{inputenc}
\usepackage{amsmath,amssymb, comment}
\usepackage{graphicx,multirow,array,enumerate,wasysym}
\usepackage[left=3.6cm,right=3.6cm,bottom=3.6cm,top=3.6cm]{geometry}
\usepackage{amsthm}
\usepackage{natbib}
\usepackage{tikz}
\usepackage{microtype}
\usepackage{mathtools}
\usepackage{hyperref}

\bibpunct{[}{]}{,}{n}{}{;}



\newtheorem{theorem}{Theorem}[section]
\newtheorem{lemma}[theorem]{Lemma}
\newtheorem{proposition}[theorem]{Proposition}

\newtheorem{corollary}[theorem]{Corollary}
\theoremstyle{definition}

\newtheorem{example}[theorem]{Example}


\newcommand{\SZ}{\mathbb{Z}}                    
\newcommand{\SC}{\mathbb{C}}                    
\newcommand{\SP}{\mathbb{P}}                    %
\newcommand{\SA}{\mathbb{A}}                    %
\newcommand{\CZ}{\mathcal{Z}}                    %
\newcommand{\CC}{\mathcal{C}}                    %
\newcommand{\CF}{\mathcal{F}}                    

\newcommand{\frakg}{\mathfrak{g}}
\newcommand{\frakheis}{\mathfrak{heis}}
\newcommand{\ra}[1]{\kern-1.5ex\xrightarrow{\ \ #1\ \ }\phantom{}\kern-1.5ex}
\newcommand{\ras}[1]{\kern-1.5ex\xrightarrow{\ \ \smash{#1}\ \ }\phantom{}\kern-1.5ex}

\sloppy

\title{Young walls and equivariant Hilbert schemes of points in type $D$ }

\author{Ádám Gyenge}
\address{Mathematical Institute, University of Oxford, Andrew Wiles Building, Woodstock Road, OX2 6GG, Oxford, UK}
\email{Adam.Gyenge@maths.ox.ac.uk}



\begin{document}

\begin{abstract}
We give a combinatorial proof for a multivariable formula of the generating series of type D Young walls. Based on this we give a motivic refinement of a formula for the generating series of Euler characteristics of Hilbert schemes of points on the orbifold surface of type D. 
\end{abstract}

\maketitle


\section{Introduction}

In this paper we survey and refine some existing formulas expressing a connection between affine Lie algebras, Young diagram combinatorics and singularity theory as investigated in \cite{gyenge2015announcement,gyenge2015euler}. This connection is in the context of Hilbert schemes of points on orbifold surface singularities.

Let $G \subset SL(2,\mathbb{C})$ be a finite subgroup. The equivariant Hilbert scheme $\mathrm{Hilb}([\SC^2/G])$ is the moduli space of $G$-invariant finite colength subschemes of $\SC^2$, the invariant part of $\mathrm{Hilb}(\SC^2)$ under the lifted action of~$G$. 
This space decomposes as
\[ \mathrm{Hilb}([\SC^2/G])=\bigsqcup_{\rho \in {\mathop{\rm Rep}}(G)}\mathrm{Hilb}^{\rho}([\SC^2/G])\]
where
\[\mathrm{Hilb}^{\rho}([\SC^2/G])=  \{ I \in \mathrm{Hilb}(\SC^2)^G \colon H^0(\mathcal{O}_{\SC^2}/I) \simeq_G \rho \}\]
for any finite-dimensional representation $\rho\in {\mathop{\rm Rep}}(G)$ of $G$;
here $\mathrm{Hilb}(\SC^2)^G$ is the set of $G$-invariant ideals of $\SC[x,y]$, and $\simeq_G$ means $G$-equivariant isomorphism. Being components of fixed point sets of a finite group acting on smooth quasiprojective varieties, the orbifold Hilbert schemes themselves are smooth and quasiprojective \cite{cartan1957quotient}.

The topological Euler characteristics of the equivariant Hilbert scheme can be collected into a generating function. Let $\rho_0,\ldots,\rho_n\in\mathop{\rm Rep}(G)$ denote the (isomorphism classes of) irreducible representations of $G$, with $\rho_0$ the trivial representation.
The \textit{orbifold generating series} of the orbifold $[\SC^2/G]$ is 
\[Z_{[\SC^2/G]}(q_0,\ldots, q_n)= \sum_{m_0,\dots,m_n=0}^\infty \chi\left(\mathrm{Hilb}^{m_0 \rho_0 + \ldots +m_n \rho_n}([\SC^2/G]) \right)   q_0^{m_0}\cdots q_n^{m_n}\]
where $\rho_0,\ldots,\rho_n$ are the irreducible representations of $G_\Delta$, and  $q_0,\dots,q_n$ are formal variables.


It turns out that if $\SC^2/G$ is a simple (Kleinian, DuVal) singularity, then the series $Z_{[\SC^2/G]}(q_0,\ldots, q_n)$ is closely connected to character formulas of certain affine Lie algebras. In general, for an affine Lie algebra $\tilde\frakg$, denote by $\widetilde{\frakg\oplus\SC}$ the  Lie algebra that is 
the direct sum of it and an infinite Heisenberg algebra $\frakheis$, 
with their centers identified. Let $V_0$ be the basic representation of $\tilde\frakg$. 
Let $\CF$ be the standard
Fock space representation of $\frakheis$, having central charge 1. Then $V=V_0\otimes \CF$ is a representation of 
$\widetilde{\frakg\oplus\SC}$ that is called the {\em extended basic representation}.
A distinguished basis of this representation is was introduced by Kashiwara in the context of 
the associated quantum groups; this is known as the ``crystal basis''.

Suppose that the affine Lie algebra $\tilde\frakg$ is the extended loop algebra of a simple finite dimensional Lie algebra of simply laced type. 
The simply laced finite type Lie algebras are classified into three families: type $A_n$ for $n\geq 1$, type $D_n$ for $n \geq 4$  and type $E_n$ for $n=6,7,8$. 
To each type (or root system) $\Delta$ there also corresponds a finite subgroup of $SL(2, \mathbb{C})$; this will be denoted by $G_{\Delta}$.

It is known that the equivariant Hilbert schemes $\mathrm{Hilb}^{\rho}([\SC^2/\Gamma])$ for all finite dimensional representations $\rho$ of $G$ are Nakajima quiver varieties~\cite{nakajima2002geometric} associated to $\widetilde\Delta$, with dimension vector determined by $\rho$, and a specific stability condition (see \cite{fujii2005combinatorial, nagao2009quiver} for more details for type $A$).
The results of~\cite{nakajima2002geometric} on the relation between the cohomology of quiver varieties and affine Lie algebras, specialized to this case, imply that the direct sum of all cohomology groups $H^*(\mathrm{Hilb}^{\rho}([\SC^2/G]))$ is graded isomorphic to the extended basic representation~$V$ of the corresponding extended affine Lie algebra $\widetilde{\frakg\oplus\SC}$. 
This result combined with the Weyl-Kac character formula for the extended basic representation gives the following formula (see \cite[Appendix A]{gyenge2015euler}):
\begin{gather} Z_{[\SC^2/G_\Delta]}(q_0,\dots,q_n)= \\\left(\prod_{m=1}^{\infty}(1-q^m)^{-1}\right)^{n+1}\cdot\sum_{ \mathbf{m}=(m_1,\dots,m_n) \in \SZ^n } q_1^{m_1}\cdots q_n^{m_n}(q^{1/2})^{\mathbf{m}^\top \cdot C_\Delta \cdot \mathbf{m}}\label{eq:orbi_main_formula}
\end{gather}
where $q=\prod_{i=0}^n q_i^{d_i}$ with $d_i=\dim\rho_i$, 
and $C_\Delta$ is the finite type Cartan matrix corresponding to $\Delta$.

At least in types A and D an even stronger statement can be obtained. In these cases the elements of the crystal basis are in bijection with certain combinatorial objects called Young walls of type $\Delta$. The set of Young walls of type $\Delta$ will be denoted as $\mathcal{Z}_{\Delta}$; these are endowed with an $n+1$ dimensional multi-weight: $\mathbf{wt}(\lambda)=(\mathrm{wt}_0(\lambda),\dots, \mathrm{wt}_n(\lambda))$. The multi-variable generating series of objects in $\mathcal{Z}_{\Delta}$ is
\[Z_{\Delta}(q_0,\dots,q_n) = \sum_{\lambda\in \CZ_\Delta} \mathbf{q}^{\mathbf{wt}(\lambda)}\]
where we used the multi-index notation 
\[\mathbf{q}^{\mathbf{wt}(\lambda)}=\prod_{i=0}^nq_i^{\mathrm{wt}_i(\lambda)}.\] 
An important property of Young walls in types A and D is that there is a bijection
\begin{equation} \CZ_\Delta  \longleftrightarrow  {\mathcal P}^{n+1}  \times  \SZ^n\end{equation}
where ${\mathcal P}$ is the set of ordinary partitions and $n$ is the rank of the root system. In type A this goes back to \cite{james1981representation}; for the type D case see Proposition \ref{prop:dncoredecomp} below. This serves as the starting point of the following enhancement of \eqref{eq:orbi_main_formula}.
\begin{theorem} Let $\Delta$ be of type A or D.
\label{thm:main}
\begin{enumerate}
\item 
\begin{gather*} Z_{\Delta}(q_0,\dots,q_n)= \\ \left(\prod_{m=1}^{\infty}(1-q^m)^{-1}\right)^{n+1}\cdot\sum_{ \mathbf{m}=(m_1,\dots,m_n) \in \SZ^n } q_1^{m_1}\cdots q_n^{m_n}(q^{1/2})^{\mathbf{m}^\top \cdot C_\Delta \cdot \mathbf{m}}.\end{gather*}
\item There exist a locally closed decomposition
of $\mathrm{Hilb}([\SC^2/G_\Delta])$ into strata indexed by the elements of $\CZ_\Delta$.
Each stratum is isomorphic to an affine space.
\item In particular,
\[Z_{[\SC^2/G_\Delta]}(q_0,\dots,q_n)=Z_{\Delta}(q_0,\dots,q_n).\]
\end{enumerate}
\end{theorem}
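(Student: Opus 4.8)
The plan is to prove part (1) combinatorially, then derive part (2) geometrically, and obtain part (3) as an immediate consequence.

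First I would establish the combinatorial heart, part (1). The key input is the bijection $\CZ_\Delta \longleftrightarrow \mathcal{P}^{n+1} \times \SZ^n$ of Proposition \ref{prop:dncoredecomp}; in type $D$ this decomposes a Young wall $\lambda$ into an $(n+1)$-tuple of ordinary partitions (the ``quotient'' data) together with a ``core'' indexed by $\SZ^n$. The strategy is to show that this bijection is compatible with multi-weights in the precise sense that $\mathbf{q}^{\mathbf{wt}(\lambda)}$ factors as a product of a contribution from each partition in the quotient and a contribution from the core. Each ordinary partition, summed over $\mathcal{P}$, contributes a factor $\prod_{m\ge 1}(1-q^m)^{-1}$ after tracking how adding a box changes the weight — one checks that each box contributes a power of $q = \prod q_i^{d_i}$. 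The core data indexed by $\mathbf{m} \in \SZ^n$ contributes the term $q_1^{m_1}\cdots q_n^{m_n}(q^{1/2})^{\mathbf{m}^\top C_\Delta \mathbf{m}}$; verifying this requires computing the multi-weight of the core Young wall attached to $\mathbf{m}$ explicitly, which amounts to a quadratic-form identity in the $\SZ^n$ lattice with the finite-type Cartan matrix $C_\Delta$. I expect this weight bookkeeping for the type $D$ cores — getting the cross terms to assemble into $\mathbf{m}^\top C_\Delta \mathbf{m}$ rather than some shifted or twisted form — to be the main obstacle, since the type $D$ Young wall combinatorics has the two ``forked'' nodes at each end and a $\SZ/2$-type subtlety at the relevant vertices.

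Second, for part (2), I would invoke the identification of $\mathrm{Hilb}^\rho([\SC^2/G_\Delta])$ with Nakajima quiver varieties for $\widetilde\Delta$ and use a torus action on $\SC^2$ commuting with the $G_\Delta$-action to define a $\mathbb{C}^*$- (or $(\mathbb{C}^*)^2$-) action on the Hilbert scheme. The Bialynicki-Birula decomposition associated to a generic one-parameter subgroup then stratifies $\mathrm{Hilb}([\SC^2/G_\Delta])$ into locally closed pieces, each an affine space, indexed by the torus-fixed points. The fixed points of the $(\mathbb{C}^*)^2$-action on $\mathrm{Hilb}(\SC^2)^G$ are the monomial ideals, and monomial $G_\Delta$-invariant ideals of $\SC[x,y]$ are exactly enumerated by type $D$ Young walls — this is the geometric counterpart of the combinatorial model, and I would cite or lift the type $A$ argument of \cite{fujii2005combinatorial,nagao2009quiver} to type $D$. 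One must check the fixed points are isolated (so that each BB-stratum is a genuine affine cell) and that the index set matches $\CZ_\Delta$ with the correct weight, i.e. that the multi-weight $\mathbf{wt}(\lambda)$ corresponds to the representation type $\rho$ of the stratum.

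Finally, part (3) is immediate: since $\mathrm{Hilb}([\SC^2/G_\Delta])$ admits a locally closed decomposition into affine spaces indexed by $\CZ_\Delta$, with the stratum for $\lambda \in \CZ_\Delta$ lying in $\mathrm{Hilb}^{\mathbf{wt}(\lambda)}$, and since $\chi(\mathbb{A}^k) = 1$, the Euler characteristic $\chi(\mathrm{Hilb}^{m_0\rho_0 + \cdots + m_n\rho_n})$ equals the number of $\lambda \in \CZ_\Delta$ with $\mathbf{wt}(\lambda) = (m_0,\dots,m_n)$. Summing against $\mathbf{q}^{\mathbf{wt}}$ gives $Z_{[\SC^2/G_\Delta]} = Z_\Delta$, and part (1) then identifies this common series with the closed formula, recovering and refining \eqref{eq:orbi_main_formula}.
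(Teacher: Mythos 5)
Your plan for part (1) follows essentially the same route as the paper: use the core/quotient bijection of Proposition \ref{prop:dncoredecomp}, note that each box of the $n+1$ quotient partitions corresponds to a bar of multi-weight $(1,1,2,\dots,2,1,1)$, i.e.\ contributes exactly $q$, so the quotient data gives $\left(\prod_{m\geq 1}(1-q^m)^{-1}\right)^{n+1}$, and then prove that the multi-weight of the core indexed by $\SZ^n$ becomes $q_1^{m_1}\cdots q_n^{m_n}(q^{1/2})^{\mathbf{m}^\top C_\Delta\mathbf{m}}$ after a suitable $\SZ$-linear change of coordinates. You correctly identify this last quadratic-form identity as the main obstacle, but be aware that it is not a routine check: it is the actual content of Sections \ref{sect:dnabacus}--\ref{seq:enumyw} (the explicit core weight formula of Lemma \ref{lem:dncorecolorweight}, the change of variables $(z_i)\mapsto(m_i)$, and the computation in the proof of Theorem \ref{thm:orbiserdn}), so as it stands your part (1) is a correct plan with the decisive computation left undone. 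Part (3) as you deduce it from (1) and (2) is fine.

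The genuine gap is in your argument for part (2) in type D. The group $G_\Delta$ is then the binary dihedral group, which is nonabelian and not contained in the diagonal maximal torus; a generic element of $(\SC^*)^2$ does not normalize $G_\Delta$ (conjugation sends the anti-diagonal generator out of the group), so the two-dimensional torus does not act on $\mathrm{Hilb}([\SC^2/G_\Delta])$ at all --- only the diagonal one-parameter subgroup $\SC^*$ survives. Consequently the fixed locus is not a finite set of monomial ideals: it is a disjoint union of positive-dimensional components $Z_Y$, and it is these components, not monomial ideals, that are indexed by $\CZ_\Delta$. Indeed, a monomial ideal invariant under the full binary dihedral group must be symmetric under swapping $x$ and $y$, and such ideals are not in bijection with type $D$ Young walls, so the Bia{\l}ynicki-Birula decomposition you propose would neither have affine cells for free (the fixed components must themselves be shown to be affine spaces) nor be indexed by the right set. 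The correct argument, which the paper does not reprove but cites from \cite{gyenge2015euler}, flows along the diagonal $\SC^*$-action to the components $Z_Y$ (shown there to be affine spaces indexed by $\CZ_\Delta$) and then shows each attracting stratum is a Zariski-locally trivial affine fibration over $Z_Y$, trivialized as a vector bundle, hence an affine space. Your proposal would work verbatim only in type A, where $G_\Delta$ is cyclic and diagonal, which is exactly the case already treated in \cite{fujii2005combinatorial}.
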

The type A case of this statement was proved in \cite{fujii2005combinatorial}.
Part (2) for type D was proved in \cite{gyenge2015euler}. The statement of part (1) in type D was also stated in \cite{gyenge2015euler}, and a proof was sketched in \cite{gyenge2016phdthesis}; we flesh this out in Sections \ref{sect:dnabacus}--\ref{seq:enumyw} below.

One can also consider a motivic enhancement of the series considered above. Let $K_0(\mathrm{Var})$ be Grothendieck ring of quasi-projective varieties over the complex numbers. The \emph{motivic Hilbert zeta function} of the orbifold $[\SC^2/G_\Delta]$ is 
\[ \mathcal{Z}_{[\SC^2/G_\Delta]}(q_0,\ldots, q_n)= \sum_{m_0,\dots,m_n=0}^\infty [\mathrm{Hilb}^{m_0 \rho_0 + \ldots +m_n \rho_n}([\SC^2/G_\Delta]) ]   q_0^{m_0}\cdots q_n^{m_n}. \]
Here $[X]$ denotes the class of $X$ in $K_0(\mathrm{Var})$, and it is not to be confused with orbifold quotients. The series $\mathcal{Z}_{[\SC^2/G_\Delta]}(q_0,\ldots, q_n)$ is an element in $K_0(Var)[[q_0,\dots,q_n]]$.

The combination of \cite[Corollary 1.11]{bryan2019g} with Theorem \ref{thm:main} gives an explicit representation for the motivic Hilbert zeta function.
\begin{corollary} Let $\Delta$ be of type A or D.  
\label{thm:motive}
\begin{gather*}\mathcal{Z}_{[\SC^2/G_\Delta]}(q_0,\dots,q_n)= \\ \left(\prod_{m=1}^{\infty}(1-\mathbb{L}^{m+1}q^m)^{-1}(1-\mathbb{L}^{m}q^m)^{-n}\right)\cdot\sum_{ \mathbf{m}=(m_1,\dots,m_n) \in \SZ^n } q_1^{m_1}\cdots q_n^{m_n}(q^{1/2})^{\mathbf{m}^\top \cdot C_\Delta \cdot \mathbf{m}}
\end{gather*}
where $\mathbb{L}  = [\mathbb{A}^{1}]\in
K_{0}(\mathrm{Var})$.
\end{corollary}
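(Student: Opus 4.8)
The plan is to derive Corollary~\ref{thm:motive} by combining the combinatorial/geometric content of Theorem~\ref{thm:main} with the motivic power-structure result \cite[Corollary 1.11]{bryan2019g}. First I would recall precisely what \cite[Corollary 1.11]{bryan2019g} provides: a formula for the motivic (class-valued) generating series of the equivariant Hilbert scheme of $[\SC^2/G]$ in terms of the motivic classes of the "small" pieces, i.e. a motivic lift of the Euler-characteristic statement. Concretely, the Bryan--Gyenge result should express $\mathcal{Z}_{[\SC^2/G_\Delta]}$ as a motivic-exponential (power structure) applied to a comparatively simple generating series, and the key input it needs is that each stratum appearing in a suitable decomposition of $\mathrm{Hilb}([\SC^2/G_\Delta])$ is an affine space of known dimension — which is exactly what part (2) of Theorem~\ref{thm:main} supplies.

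Next I would match the combinatorial bookkeeping. By Theorem~\ref{thm:main}(2), $\mathrm{Hilb}([\SC^2/G_\Delta])$ has a locally closed decomposition into affine-space strata indexed by $\CZ_\Delta$, and by the bijection $\CZ_\Delta \leftrightarrow \mathcal{P}^{n+1}\times\SZ^n$ each stratum indexed by $(\lambda^{(0)},\dots,\lambda^{(n)},\mathbf{m})$ has dimension equal to the total size $|\lambda^{(0)}|+\dots+|\lambda^{(n)}|$ (the "partition" part contributes the continuous moduli, while the $\SZ^n$ part only shifts the multidegree). Summing $\mathbb{L}^{\dim}$ over all strata then factors: the $\SZ^n$ sum reproduces the theta-like factor $\sum_{\mathbf{m}\in\SZ^n} q_1^{m_1}\cdots q_n^{m_n}(q^{1/2})^{\mathbf{m}^\top C_\Delta \mathbf{m}}$ exactly as in Theorem~\ref{thm:main}(1), while the $(n+1)$-fold product over partitions yields a product of $(n+1)$ copies of a motivic partition series $\sum_{\lambda\in\mathcal{P}}\mathbb{L}^{|\lambda|}q^{|\lambda|}$-type generating function, but with the $\mathbb{L}$-weights dictated by how the $q$-variables $q_0,\dots,q_n$ enter through $q=\prod q_i^{d_i}$. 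The arithmetic to pin down is that the resulting product of partition-type series equals $\prod_{m\geq 1}(1-\mathbb{L}^{m+1}q^m)^{-1}(1-\mathbb{L}^{m}q^m)^{-n}$; this is where one uses the precise normalization from \cite[Corollary 1.11]{bryan2019g}, in which one of the $n+1$ factors carries an extra power of $\mathbb{L}$ (coming from the $\rho_0$/Heisenberg direction, matching the $(1-q^m)^{-1}$ versus the Heisenberg-vacuum structure in Theorem~\ref{thm:main}(1)), giving $n$ factors of $\prod(1-\mathbb{L}^{m}q^m)^{-1}$ and one factor of $\prod(1-\mathbb{L}^{m+1}q^m)^{-1}$.

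I would then assemble these pieces: apply \cite[Corollary 1.11]{bryan2019g} to turn the stratification of Theorem~\ref{thm:main}(2) into the class-level identity, substitute the explicit affine-space dimensions coming from the bijection with $\mathcal{P}^{n+1}\times\SZ^n$, and read off the closed form. As a consistency check, setting $\mathbb{L}=1$ must recover Theorem~\ref{thm:main}(3)/(1), which fixes any ambiguity in how the $\mathbb{L}$-powers are distributed among the prefactor. The main obstacle I anticipate is not conceptual but one of careful alignment of conventions: making sure the dimension grading of the strata in Theorem~\ref{thm:main}(2) (as established in \cite{gyenge2015euler}) is compatible with the grading/normalization used in \cite[Corollary 1.11]{bryan2019g}, so that the extra "$+1$" in $\mathbb{L}^{m+1}$ lands on the correct one of the $n+1$ product factors; once that matching is verified, the corollary follows by direct substitution with no further geometry required.
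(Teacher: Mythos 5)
Your top-level strategy (feed Theorem \ref{thm:main} into \cite[Corollary 1.11]{bryan2019g}) is the same as the paper's, which proves the corollary by exactly this combination; but the mechanism you propose contains a genuine error. You assert that the stratum of $\mathrm{Hilb}([\SC^2/G_\Delta])$ indexed by $(\lambda^{(0)},\dots,\lambda^{(n)},\mathbf{m})\in{\mathcal P}^{n+1}\times\SZ^n$ is an affine space of dimension $|\lambda^{(0)}|+\dots+|\lambda^{(n)}|$. That is false for the full equivariant Hilbert scheme: summing $\mathbb{L}^{\sum_i|\lambda^{(i)}|}$ over strata would yield the prefactor $\prod_{m\geq 1}(1-\mathbb{L}^m q^m)^{-(n+1)}$, which is precisely the series the paper obtains for $\mathrm{Hilb}([\SC^2/G_\Delta],E)$, the ideals supported on the invariant divisor $E$ (see part (2) of the corollary following Corollary \ref{thm:motive}, where the stratum class is $\mathbb{L}^{\sum_i|\lambda_i|}$), and it differs from the asserted formula. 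The strata of the full Hilbert scheme are strictly larger, because subschemes may be supported away from the origin; it is the free locus $(\SC^2\setminus 0)/G_\Delta$, via G\"ottsche's motivic formula, that is responsible for the single asymmetric factor $\prod_m(1-\mathbb{L}^{m+1}q^m)^{-1}$ (this is visible in the paper's computation of $\mathcal{Z}_{([\SC^2/G_\Delta],0)}$, where one divides by $\prod_m(1-\mathbb{L}^{m+1}q^m)^{-1}(1-\mathbb{L}^{m-1}q^m)$). Moreover, Theorem \ref{thm:main}(2) only says the strata are affine spaces; it records no dimensions, so your count has no source for the extra powers of $\mathbb{L}$.

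Your patch does not close this gap. Saying that ``the precise normalization of \cite[Corollary 1.11]{bryan2019g}'' places an extra $\mathbb{L}$ on the $\rho_0$/Heisenberg factor concedes that the motivic weights must be imported wholesale from that result rather than derived from your stratification — which is what the paper's one-line proof does anyway — and the proposed $\mathbb{L}=1$ consistency check cannot ``fix the ambiguity'': every distribution of $\mathbb{L}$-powers among the $n+1$ factors, e.g.\ $\prod_m(1-\mathbb{L}^m q^m)^{-(n+1)}$ versus $\prod_m(1-\mathbb{L}^{m+1}q^m)^{-1}(1-\mathbb{L}^m q^m)^{-n}$, specializes to the same $\prod_m(1-q^m)^{-(n+1)}$. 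To make the argument correct, either quote \cite[Corollary 1.11]{bryan2019g} as supplying the universal motivic weights (so that Theorem \ref{thm:main} is only needed for the explicit Euler-characteristic/theta series, as in the paper), or replace the dimension claim by an actual geometric computation, e.g.\ factoring $\mathcal{Z}_{[\SC^2/G_\Delta]}$ into the punctual part at the origin and the G\"ottsche factor of the smooth surface $(\SC^2\setminus 0)/G_\Delta$, and computing the punctual strata dimensions separately.
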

Once again, in type A this statement was proved in \cite{fujii2005combinatorial}. The above series has further specializations giving formulas for the  Hodge polynomials and
Poincar\'e polynomials of the equivariant Hilbert schemes.

Let $Y \subset \SC^2$ be a closed subvariety invariant under the action of $G_\Delta$. One can consider the moduli space $\mathrm{Hilb}([\SC^2/G_\Delta],Y)\subset \mathrm{Hilb}([\SC^2/G_\Delta])$ of points supported on $Y$. The corresponding motivic generating series is 
\[ \mathcal{Z}_{([\SC^2/G_\Delta],Y)}(q_0,\ldots, q_n)= \sum_{m_0,\dots,m_n=0}^\infty [\mathrm{Hilb}^{m_0 \rho_0 + \ldots +m_n \rho_n}([\SC^2/G_\Delta],Y) ]   q_0^{m_0}\cdots q_n^{m_n}. \]
The techniques of \cite{gusein2004power} imply that
\begin{equation*}\label{eq:hilbmot} \mathcal{Z}_{[\SC^2/G_\Delta]}(q_0,\dots,q_n)=\mathcal{Z}_{([\SC^2/G_\Delta],Y)}(q_0,\dots,q_n)\cdot \mathcal{Z}_{[(\SC^2\setminus Y)/G_\Delta]}(q_0,\dots,q_n). \end{equation*}
This allows one to obtain further formulas from Corollary \ref{thm:motive}. For example,
\begin{gather*}\mathcal{Z}_{([\SC^2/G_\Delta],0)}(q_0,\dots,q_n)= 
\frac{\mathcal{Z}_{[\SC^2/G_\Delta]}(q_0,\dots,q_n)}{\mathcal{Z}_{[(\SC^2\setminus 0) /G_\Delta]}(q_0,\dots,q_n)}\\
= 
\frac{\mathcal{Z}_{[\SC^2/G_\Delta]}(q_0,\dots,q_n)}{\mathcal{Z}_{(\SC^2\setminus 0) /G_\Delta}(q)}
=\frac{\mathcal{Z}_{[\SC^2/G_\Delta]}(q_0,\dots,q_n)}{\prod_{m=1}^{\infty}(1-\mathbb{L}^{m+1}q^m)^{-1}(1-\mathbb{L}^{m-1}q^m)}=
\\ \left(\prod_{m=1}^{\infty}(1-\mathbb{L}^{m-1}q^m)^{-1}(1-\mathbb{L}^{m}q^m)^{-n}\right)\cdot\sum_{ \mathbf{m}=(m_1,\dots,m_n) \in \SZ^n } q_1^{m_1}\cdots q_n^{m_n}(q^{1/2})^{\mathbf{m}^\top \cdot C_\Delta \cdot \mathbf{m}},
\end{gather*}
where at the second equality we have used that $G_\Delta$ acts freely away from the origin, and at the third equality we have used the main result of \cite{gottsche1990betti} and that $[(\SC^2 \setminus {0}) /G_\Delta]=[\mathbb{L}^2]-[pt]$ in $K_0(\mathrm{Var})$.

Suppose that $\Delta$ is of type D. Let $E \subset \SC^2$ be the divisor defined by the ideal $(xy)$. This is invariant under the action of $G_\Delta$. Then
\begin{gather*}\mathcal{Z}_{([\SC^2/G_\Delta],E)}(q_0,\dots,q_n)= 
\mathcal{Z}_{([\SC^2/G_\Delta],0)}(q_0,\dots,q_n)\cdot \mathcal{Z}_{([\SC^2/G_\Delta],E \setminus {0})}(q_0,\dots,q_n)\\
=\mathcal{Z}_{([\SC^2/G_\Delta],0)}(q_0,\dots,q_n)\cdot \mathcal{Z}_{(\SC^2/G_\Delta,(E \setminus {0})/G_\Delta)}(q)\\
=\mathcal{Z}_{([\SC^2/G_\Delta],0)}(q_0,\dots,q_n)\cdot \prod_{m=1}^{\infty}(1-\mathbb{L}^{m}q^m)^{-1}(1-\mathbb{L}^{m-1}q^m)
\\ =\left(\prod_{m=1}^{\infty}(1-\mathbb{L}^{m}q^m)^{-n-1}\right)\cdot\sum_{ \mathbf{m}=(m_1,\dots,m_n) \in \SZ^n } q_1^{m_1}\cdots q_n^{m_n}(q^{1/2})^{\mathbf{m}^\top \cdot C_\Delta \cdot \mathbf{m}},
\end{gather*}
where again at the second equality we have used that $G_\Delta$ acts freely away from the origin, and at the third equality we have used $[(E \setminus {0}) /G_\Delta]=[\mathbb{L}]-[pt]$ in $K_0(\mathrm{Var})$.
\begin{corollary} 
\begin{enumerate}
	\item 
There exist a locally closed decomposition
of $\mathrm{Hilb}([\SC^2/G_\Delta],E)$ into strata indexed by the elements of $\CZ_\Delta$.
Each stratum is isomorphic to an affine space. 
\item The class in  $K_0(\mathrm{Var})$ of the stratum $\mathrm{Hilb}([\SC^2/G_\Delta],E)_Y$ corresponding to a Young wall $Y=(\lambda_1,\dots,\lambda_{n+1},\mathbf{m}) \in \CZ_\Delta \cong {\mathcal P}^{n+1}  \times  \SZ^n$ is 
\[[\mathrm{Hilb}([\SC^2/G_\Delta],E)_Y]=[\mathbb{L}]^{\sum_{i=1}^{n+1} |\lambda_i|},\]
where $|\lambda_i|=\sum_j\lambda_i^j$.
\end{enumerate}	
\end{corollary}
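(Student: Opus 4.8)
The plan is to deduce both parts from the stratification of $\mathrm{Hilb}([\SC^2/G_\Delta])$ underlying Theorem~\ref{thm:main}(2), by intersecting it with the closed subvariety $\mathrm{Hilb}([\SC^2/G_\Delta],E)$ (closed because $E$ is closed in $\SC^2$) and showing that the piece carved out of each cell is again an affine space, of codimension $\ell(\lambda_0)$, where $\lambda_0$ is the component of the Young wall attached to the trivial representation $\rho_0$.

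First I would recast the cells torically. The torus $T=(\SC^*)^2$ acts on $\SC^2$ commuting with $G_\Delta$ and preserving $E=V(xy)$, hence acts on $\mathrm{Hilb}([\SC^2/G_\Delta])$ and on $\mathrm{Hilb}([\SC^2/G_\Delta],E)$. The $T$-fixed points of both are the $G_\Delta$-invariant monomial ideals $I_Y$, indexed by $\CZ_\Delta\cong\mathcal P^{n+1}\times\SZ^n$ through the abacus/Young-wall dictionary of Sections~\ref{sect:dnabacus}--\ref{seq:enumyw} and Proposition~\ref{prop:dncoredecomp}; since $I_Y$ lies in its own ambient cell and is supported at the origin $\in E$, every ambient cell $\mathrm{Hilb}([\SC^2/G_\Delta])_Y$ meets $\mathrm{Hilb}([\SC^2/G_\Delta],E)$, so the restricted decomposition is still indexed by all of $\CZ_\Delta$. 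For a generic one-parameter subgroup $\sigma\colon\SC^*\to T$, the Bialynicki-Birula cell $\mathrm{Hilb}([\SC^2/G_\Delta])_Y$ is an affine space $\mathbb A^{d(Y)}$ (by smoothness of $\mathrm{Hilb}([\SC^2/G_\Delta])$), coordinatized by the positive-$\sigma$-weight part of $\mathrm{Hom}(I_Y,\SC[x,y]/I_Y)^{G_\Delta}$, with $d(Y)=|\lambda_0|+\ell(\lambda_0)+\sum_{i=1}^n|\lambda_i|$ for $Y\leftrightarrow(\lambda_0,\dots,\lambda_n,\mathbf m)$ (from the explicit cell description in \cite{gyenge2015euler}, consistently with Corollary~\ref{thm:motive}).

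The crux is then to show that in these coordinates the closed condition ``$\SC[x,y]/I$ is supported set-theoretically on $E$'' — equivalently, $xy$ is nilpotent modulo $I$ — cuts out exactly the coordinate subspace on which the $\ell(\lambda_0)$ parameters controlling the ``transverse depth'' of the brick columns of the $\rho_0$-component $\lambda_0$ vanish. The mechanism I would exploit is that the deformation parameters at $I_Y$ split into $G_\Delta$-isotypic families, and precisely $\ell(\lambda_0)$ of them change the value of a $G_\Delta$-invariant not vanishing identically on $E$ (for type $D_n$ one may use $x^2y^2$ and $xy(x^{2(n-2)}-y^{2(n-2)})$), the remaining $d(Y)-\ell(\lambda_0)=\sum_{i=1}^{n+1}|\lambda_i|$ parameters moving the subscheme only along the preimage of the smooth curve $E/G_\Delta\cong\mathbb A^1$. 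Granting this, $\mathrm{Hilb}([\SC^2/G_\Delta],E)_Y=\mathrm{Hilb}([\SC^2/G_\Delta])_Y\cap\mathrm{Hilb}([\SC^2/G_\Delta],E)$ is a coordinate subspace $\cong\mathbb A^{\sum_{i=1}^{n+1}|\lambda_i|}$; these are pairwise disjoint and locally closed, and they cover $\mathrm{Hilb}([\SC^2/G_\Delta],E)$ because the ambient cells cover $\mathrm{Hilb}([\SC^2/G_\Delta])$. This yields (1), and reading off the classes gives (2); as a consistency check, $\sum_Y\mathbb L^{\sum_i|\lambda_i|}\mathbf q^{\mathbf{wt}(Y)}$ recovers the formula for $\mathcal Z_{([\SC^2/G_\Delta],E)}$ derived above.

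The main obstacle is exactly this last point — that the $E$-support locus meets each cell in a \emph{linear} subspace, and not merely in a $\sigma$-invariant closed subvariety of the expected dimension and motive. That this genuinely needs the $G_\Delta$-action is clear already for the trivial group, where $\mathrm{Hilb}(\SC^2,E)\supset E=V(xy)$ is not an affine space; so the argument must keep careful track of how the $G_\Delta$-action distributes the parameters of $\mathrm{Hom}(I_Y,\SC[x,y]/I_Y)$ among isotypic components. If this direct coordinate verification proves cumbersome, the fallback is to build the decomposition of $\mathrm{Hilb}([\SC^2/G_\Delta],E)$ by hand from the ``core plus $(n+1)$-tuple of ordinary partitions'' model of Sections~\ref{sect:dnabacus}--\ref{seq:enumyw}, where adding bricks along $E$ is manifestly a cellular operation, and then to fix the individual cell dimensions from the already-established series $\mathcal Z_{([\SC^2/G_\Delta],E)}$ — which requires the minor extra step of checking that this generating function determines them, e.g. by induction on $\sum_i|\lambda_i|$.
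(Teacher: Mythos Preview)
Your setup has a genuine gap at the very first step: for $\Delta$ of type $D$, the full two-torus $(\SC^*)^2$ does \emph{not} commute with $G_\Delta$. The binary dihedral group contains $\tau=\left(\begin{smallmatrix}0&1\\-1&0\end{smallmatrix}\right)$, which swaps the coordinate axes, so only the diagonal $\SC^*\subset(\SC^*)^2$ commutes with $G_\Delta$ and lifts to $\mathrm{Hilb}([\SC^2/G_\Delta])$. Consequently the fixed locus is not the discrete set of $G_\Delta$-invariant monomial ideals you describe; by \cite[Theorem~4.3]{gyenge2015euler} it is $\bigsqcup_{Y\in\CZ_\Delta} Z_Y$ with each $Z_Y$ an affine space, in general of positive dimension. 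Your coordinatization of the Bialynicki--Birula cells via the positive-weight part of $\mathrm{Hom}(I_Y,\SC[x,y]/I_Y)^{G_\Delta}$ at an isolated fixed point therefore does not apply, and the ``crux'' step---cutting out a coordinate subspace of codimension $\ell(\lambda_0)$---is not well-posed as stated. (Your picture is the type~$A$ one, where the cyclic group does commute with the full torus.)

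The paper's route avoids any cell-by-cell coordinate analysis. With the diagonal $\SC^*$, the Bialynicki--Birula strata are Zariski locally trivial affine fibrations $\mathrm{Hilb}([\SC^2/G_\Delta])_Y\to Z_Y$. Since $G_\Delta$ acts freely on $E\setminus\{0\}$ and $(E\setminus\{0\})/G_\Delta\cong\SC^*$, restricting to ideals supported on $E$ still gives a locally trivial fibration over $Z_Y$ with affine-space fibres; the compatible $\SC^*$-action on the fibres then forces it to be an algebraic vector bundle \cite[Sect.~3]{bialynicki1973some}, hence trivial over the affine base by Serre--Quillen--Suslin. That is Part~(1). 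Part~(2) is exactly your fallback: once the strata are known to be affine spaces, their individual dimensions are read off from the already-computed series $\mathcal{Z}_{([\SC^2/G_\Delta],E)}$, not determined by a direct tangent-space calculation.
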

\begin{proof}
The proof of Part (1) is very similar to that of \cite[Theorem 4.1]{gyenge2015euler}. The divisor $E$ is preserved by the diagonal torus action on $\SC^2$ used in \cite{gyenge2015euler} for the stratification of $\mathrm{Hilb}([\SC^2/G_\Delta])$. It follows that the torus action on $\mathrm{Hilb}([\SC^2/G_\Delta],E)$ has the same fixed points as the torus action on $\mathrm{Hilb}([\SC^2/G_\Delta])$. By \cite[Theorem 4.3]{gyenge2015euler},
\[\mathrm{Hilb}([\SC^2/G_\Delta])^{\SC^{\ast}}=\bigsqcup_{Y \in \CZ_\Delta}Z_Y\]
where each $Z_Y$ is an affine space. Let $\mathrm{Hilb}([\SC^2/G_\Delta],E)_Y \subset \mathrm{Hilb}([\SC^2/G_\Delta],E)$ denote the locus of ideals which flow to $Z_Y$ under the torus action.
Since $(E \setminus {0}) /G_\Delta \cong \SC^{\ast}$, the Zariski locally trivial fibration
$\mathrm{Hilb}([\SC^2/G_\Delta])_Y \to Z_Y$ explored in \cite[Theorem 4.1]{gyenge2015euler} restricts to a Zariski locally trivial fibration $\mathrm{Hilb}([\SC^2/G_\Delta],E)_Y \to Z_Y$ with affine space fibers, and a compatible torus action on the fibers. By \cite[Sect.3, Remarks]{bialynicki1973some} this fibration is an algebraic vector bundle over $Z_Y$, and hence trivial (Serre–Quillen–Suslin).

Part (2) follows from Part (1) and the formula for $\mathcal{Z}_{([\SC^2/G_\Delta],E)}(q_0,\dots,q_n)$ above.
\end{proof}

The aim of the current paper is twofold. First, we give an exposition about the combinatorics of the Young walls in type D. Second, we give a complete proof of Theorem \ref{thm:main} (1) (and hence (3)) in the type D case.

The structure of the rest of the paper is as follows.  
In Section 2 we review the combinatorics of the Young walls in type D. In Section 3 we introduce an associated combinatorial tool called the abacus. Using this we will calculate the generating series $Z_{\Delta}(q_0,\dots,q_n)$ of Young walls of type D and prove Theorem \ref{thm:main} (1) in Section 4.



\subsection*{Acknowledgement} The author is thankful to Jim Bryan, Andr\'as N\'emethi and Bal\'azs Szendr\H{o}i for fruitful conversations about the topic.

\section{Young walls of type \texorpdfstring{$D_n$}{Dn}}
\label{Dnidealsect}

It is known that when $\Delta=A_n$, $n\geq 1$, the Young wall $\mathcal{Z}_{\Delta}=\mathcal{P}$, the set of all Young diagrams/partitions equipped with the diagonal coloring (see \cite{gyenge2017enumeration}). We describe here the type $D$ analogue of the set of diagonally colored partitions used in type $A$, following \cite{kang2004crystal,kwon2006affine}.

First we define the {\em Young wall pattern of type 
 $D_n$}. 
This is the following infinite
pattern, consisting of two types of blocks: half-blocks carrying possible labels
$j\in\{0,1,n-1, n\}$, and full blocks carrying possible labels $1<j<n-1$:

\begin{center}
\begin{tikzpicture}[scale=0.6, font=\footnotesize, fill=black!20]
  \foreach \x in {1,2,3,4,5,6,7,8}
    {
      \draw (\x, 0) -- (\x,11+0.2);
    }
     \draw (0, 0) -- (0,12);
   \foreach \y in {1,2,3.5,4.5,5.5,6.5,8,9,10,11}
    {
         \draw (0,\y) -- (8.2,\y);
    }
    \draw (0,0) -- (9,0);
    \foreach \x in {0,1,2,3,4,5,6,7}
    {
    	\draw (\x,4.5) -- (\x+1,5.5);
    	\draw (\x,9) -- (\x+1,10);
    	\draw (\x+0.5,1.5) node {2};
    	\draw (\x+0.5,4) node {$n$$-$$2$};
    	\draw (\x+0.5,6) node {$n$$-$$2$};
    	\draw (\x+0.5,8.5) node {2};
    	\draw (\x+0.5,10.5) node {2};
    	\draw(\x+0.5,2.85) node {\vdots};
    	\draw(\x+0.5,7.35) node {\vdots};
    	\filldraw (\x,0) -- (\x+1,1) -- (\x+1,0) -- cycle ;
    }
     \foreach \x in {0,2,4,6}
        {
        	\draw (\x+0.25,0.65) node {0};
        	\draw (\x+0.75,0.35) node {1};
        	\draw (\x+0.49,5.28) node  {$n$$-$$1$};
        	\draw (\x+0.75,4.72) node {$n$};
        	\draw (\x+0.25,9.65) node {0};
        	\draw (\x+0.75,9.35) node {1};
        }
       \foreach \x in {1,3,5,7}
             {
             	\draw (\x+0.25,0.65) node {1};
             	\draw (\x+0.75,0.35) node {0};
             	\draw (\x+0.25,5.28) node {$n$};
             	\draw (\x+0.52,4.72) node {$n$$-$$1$};
             	\draw (\x+0.25,9.65) node {1};
             	\draw (\x+0.75,9.35) node {0};
             }
             
       \draw (9,5) node {\dots};
       \draw (4,12) node {\vdots};
\end{tikzpicture}
\end{center}

A {\em Young wall\footnote{In \cite{kang2004crystal,kwon2006affine}, these arrangements are called {\em proper Young walls}. Since we will not meet any other Young wall, we will drop the adjective {\em proper} for brevity.} of type $D_n$} is a 
subset~$Y$ of the infinite Young wall of type $D_n$, satisfying the following rules. 
\begin{enumerate}
\item[(YW1)] $Y$ contains all grey half-blocks, and a finite number of the white blocks and half-blocks. 
\item[(YW2)] $Y$ consists of continuous columns of blocks, with no block placed on top of a missing block or half-block. 
\item[(YW3)] Except for the leftmost column, there are no free
  positions to the left of any block or half-block. Here the rows of
  half-blocks are thought of as two parallel rows; only half-blocks of the same orientation have to be present.
\item[(YW4)] A full column is a column with a full block or both half-blocks present at its top; then no two full columns 
have the same height\footnote{This is the properness condition of \cite{kang2004crystal}.}.
\end{enumerate}

Let $\CZ_\Delta$ denote the set of all Young walls of type $D_n$. For any $Y\in \CZ_\Delta$ and label $j\in \{0, \ldots, n\}$ let $wt_j(Y)$ be the number of white half-blocks, respectively blocks, of label $j$. These are collected into the multi-weight vector $\mathbf{wt}(Y)=(wt_0(Y), \ldots, wt_n(Y))$. The total weight of $Y$ is the sum
\[|Y|=\sum_{j=0}^{n} wt_j(Y),
\]
and for the formal variables $q_0,\dots,q_n$,
\[\mathbf{q}^{\mathbf{wt}(Y)}=\prod_{j=0}^{n}q_j^{wt_j(Y)}.\]
\begin{example} 
\label{ex:ywall}	
The following is an example of a Young wall for $\Delta=D_4$:
\begin{center}
\begin{tikzpicture}[scale=0.6, font=\footnotesize, fill=black!20]

\draw (0, 0) -- (0,8) -- (1,9)--(1,8);
\draw (1, 0) -- (1,8) -- (2,9)--(2,8);
\draw (2, 0) -- (2,8) -- (3,9)--(3,8);
\draw (3, 0) -- (3,8);
\draw (4, 0) -- (4,8) -- (0,8);
\draw (5, 0) -- (5,7) -- (0,7);
\draw (6, 0) -- (6,6) -- (0,6);
\draw (7, 0) -- (7,5) -- (0,5);
\draw (7, 5) -- (8,5);
\draw (8, 0) -- (8,4);
\draw (0, 4) -- (8,4);
\draw (0, 3) -- (8,3);
\draw (0, 2) -- (8,2);
\draw (0, 1) -- (8,1);
\draw (9, 0) -- (9.3,0);

\draw (0,0) -- (9,0);
\foreach \x in {0,1,2,3,4,5,6,7}
{
	\draw (\x,2) -- (\x+1,3);
	\draw (\x,4) -- (\x+1,5);
	\draw (\x+0.5,1.5) node {2};
	\draw (\x+0.5,3.5) node {2};
	\filldraw (\x,0) -- (\x+1,1) -- (\x+1,0) -- cycle ;
}
\filldraw (8,0) -- (9,1) -- (9,0) -- cycle ;
\draw (8.75,0.35) node {1};
\foreach \x in {0,2,4,6}
{
	\draw (\x+0.25,0.65) node {0};
	\draw (\x+0.75,0.35) node {1};
	\draw (\x+0.25,2.65) node {3};
\draw (\x+0.75,2.35) node {4};
	\draw (\x+0.25,4.65) node {0};
	\draw (\x+0.75,4.35) node {1};
}
\foreach \x in {1,3,5}
{
	\draw (\x+0.25,0.65) node {1};
	\draw (\x+0.75,0.35) node {0};
	\draw (\x+0.25,2.65) node {4};
\draw (\x+0.75,2.35) node {3};
	\draw (\x+0.25,4.65) node {1};
	\draw (\x+0.75,4.35) node {0};
}
	\draw (7.25,0.65) node {1};
\draw (7.75,0.35) node {0};
\draw (7.25,2.65) node {4};
\draw (7.75,2.35) node {3};
\draw (7.25,4.65) node {1};

\foreach \x in {0,2}
{
	\draw (\x+0.25,6.65) node {3};
	\draw (\x+0.75,6.35) node {4};
	\draw (\x+0.75,8.35) node {1};
	\draw (\x,6) -- (\x+1,7);
}
\draw (1.25,6.65) node {4};
\draw (1.75,6.35) node {3};
\draw (1.75,8.35) node {0};
\draw (3.25,6.65) node {4};
\draw (3.75,6.35) node {3};
\draw (1,6) -- (2,7);
\draw (3,6) -- (4,7);
\draw (4.25,6.65) node {3};
\draw (4.75,6.35) node {4};
\draw (4,6) -- (5,7);
\foreach \x in {0,1,2,3}
{
	\draw (\x+0.5,5.5) node {2};
	\draw (\x+0.5,7.5) node {2};
}
\draw (4.5,5.5) node {2};
\draw (5.5,5.5) node {2};
\draw (9.8,0.5) node {\dots};
\end{tikzpicture}
\end{center}
\end{example}

\section{Abacus combinatorics}
\label{sect:dnabacus}


Recalling the Young wall rules (YW1)-(YW4), it is clear that every $Y\in \CZ_\Delta$ can be decomposed as
$Y=Y_1 \sqcup Y_2$, where $Y_1\in \CZ_\Delta$ has full columns only, and $Y_2\in \CZ_\Delta$ has all its columns
ending in a half-block. These conditions define two subsets $\CZ^f_\Delta, \CZ^h_\Delta\subset \CZ_\Delta$.
Because of the Young wall rules, the pair $(Y_1, Y_2)$ uniquely reconstructs $Y$, so we get a bijection
\begin{equation}\label{decomp_D_YW}\CZ_\Delta \longleftrightarrow \CZ^f_\Delta\times \CZ^h_\Delta.\end{equation}

Given a Young wall $Y \in \mathcal{Z}_\Delta$ of type $D_n$, let $\lambda_k$ denote the number of 
blocks (full or half blocks both contributing 1) in the $k$-th vertical 
column. By the rules of Young walls, the resulting positive integers
$\{\lambda_1,\dots,\lambda_r\}$ form a partition $\lambda=\lambda(Y)$ of weight equal to the total weight $|Y|$, 
with the additional property that 
its parts $\lambda_k$ are distinct except when $\lambda_k \equiv 0\;\mathrm{mod}\; (n-1)$. 
Corresponding to the decomposition~\eqref{decomp_D_YW}, we get a decomposition 
$\lambda(Y) = \mu(Y)\sqcup \nu(Y)$. In $\mu(Y)$, no part is congruent to $0$ modulo $(n-1)$, and there 
are no repetitions; all parts in $\nu(Y)$ are congruent to $0$ modulo $(n-1)$ and repetitions are allowed. 
Note that the pair $(\mu(Y), \nu(Y))$ does almost, but not quite, encode $Y$, because of the ambiguity 
in the labels of half-blocks on top of non-complete columns.

We now introduce another combinatorial object, \emph{the abacus of type 
$D_n$} (\cite{kang2004crystal, kwon2006affine}). 
This is the arrangement of positive integers, called positions, in the following pattern:

\vspace{.1in}
\begin{center}
\begin{tabular}{c c c c c c c c}
1 & \dots & $n-2$ & $n-1$ & $n$ & \dots & $2n-3$ & $2n-2$  \\
$2n-1$ & \dots & $3n-4$ & $3n-3$ & $3n-2$ & \dots & $4n-5$ & $4n-4$\\
\vdots & &\vdots & \vdots & \vdots& &\vdots & \vdots
\end{tabular}
\end{center}
\vspace{.1in}

For any integer $1 \leq k \leq 2n-2$, the set of positions in the $k$-th column of the abacus is the \emph{$k$-th 
ruler}, denoted $R_k$. Several \emph{beads} are placed on these rulers. For $k \not\equiv 0\; \mathrm{mod}\; (n-1)$, 
the rulers $R_k$ can only contain normal (uncolored) beads,
with each position occupied 
by at most one bead. On the rulers $R_{n-1}$ and $R_{2n-2}$, the beads are colored white and black.
An arbitrary number of white or black beads can be put on each such position, 
but each position can only contain beads of the same color.

Given a type $D_n$ Young wall $Y \in \mathcal{Z}_\Delta$, let $\lambda=\mu\sqcup \nu$ be the corresponding 
partition with its decomposition. For each nonzero part $\nu_k$ of $\nu$, set 
\[ n_k=\#\{1 \leq j \leq l(\mu)\;|\;\mu_j < \nu_k \}\]
to be the number of full columns shorter than a given non-full column. 
The abacus configuration of the Young wall $Y$ is defined to be the set of beads placed at positions
$\lambda_1,\dots,\lambda_r$. The beads at positions $\lambda_k=\mu_j$ are uncolored; the
color of the bead at position $\lambda_k=\nu_l$ corresponding to a column $C$ of $Y$ is
\[
\begin{cases}
\textrm{white,} & \textrm{if the block at the top of } C \textrm{ is } 
\begin{tikzpicture}
 \draw (0, 0) -- (0.3,0.3);
 \draw (0, 0) -- (0.3,0);
 \draw (0.3, 0) -- (0.3,0.3);
\end{tikzpicture}
\textrm{ and } n_l \textrm{ is even;} \\ 
\textrm{white,} & \textrm{if the block at the top of } C \textrm{ is } 
\begin{tikzpicture}
 \draw (0, 0) -- (0.3,0.3);
 \draw (0, 0) -- (0,0.3);
 \draw (0, 0.3) -- (0.3,0.3);
\end{tikzpicture}
\textrm{ and } n_l \textrm{ is odd;}\\
\textrm{black,} & \textrm{if the block at the top of } C \textrm{ is } 
\begin{tikzpicture}
 \draw (0, 0) -- (0.3,0.3);
 \draw (0, 0) -- (0,0.3);
 \draw (0, 0.3) -- (0.3,0.3);
\end{tikzpicture}
\textrm{ and } n_l \textrm{ is even;} \\ 
\textrm{black,} & \textrm{if the block at the top of } C \textrm{ is } 
\begin{tikzpicture}
 \draw (0, 0) -- (0.3,0.3);
 \draw (0, 0) -- (0.3,0);
 \draw (0.3, 0) -- (0.3,0.3);
\end{tikzpicture}
\textrm{ and } n_l \textrm{ is odd.}\\
\end{cases}
\]
One can check that the abacus rules are satisfied, that all abacus configurations satisfying the above rules, 
with finitely many uncolored, black and white beads, can arise, and that the Young wall $Y$ is uniquely determined 
by its abacus configuration.

\begin{example} \label{ex:abacusY}
The abacus configuration associated with the Young wall  of Example \ref{ex:ywall} is
\begin{center}
\begin{tikzpicture}

\draw (0,5) node {$R_1$};
\draw (1,5) node {$R_{2}$};
\draw (2,5) node {$R_{3}$};
\draw (3,5) node {$R_{4}$};
\draw (4,5) node {$R_{5}$};
\draw (5,5) node {$R_{6}$};

\draw (-0.5,4.8) -- (5.5,4.8);
\draw (2.5,5.3) -- (2.5,2.5);
\draw (1.5,5.3) -- (1.5,2.5);
\draw (4.5,5.3) -- (4.5,2.5);
\draw (5.5,5.3) -- (5.5,2.5);

\node at ( 0,4.5)  {1};
\node at ( 1,4.5)  {2};
\node at ( 2,4.5)  {3};
\node at ( 3,4.5)  {4};
\node at ( 4,4.5)  {5};
\node at ( 5,4.5) [circle,draw,inner sep=0pt,minimum size=12pt,fill=gray!50] {6};
\node at ( 0,4) [circle,draw,inner sep=0pt,minimum size=12pt] {7};
\node at ( 1,4) [circle,draw,inner sep=0pt,minimum size=12pt] {8};
\node at ( 2,4) {9};
\node at ( 3,4) [circle,draw,inner sep=0pt,minimum size=12pt] {10};
\node at ( 4,4) [circle,draw,inner sep=0pt,minimum size=12pt] {11};
\node at ( 5,4) [circle,draw,inner sep=0pt,minimum size=12pt] {12};
\node at ( 5,4)  {$\phantom{1222}^3$};
\node at ( 0,3.5)  {13};
\node at ( 1,3.5)  {14};
\node at ( 2,3.5)  {15};
\node at ( 3,3.5)  {16};
\node at ( 4,3.5)  {17};
\node at ( 5,3.5)  {18};
\node at ( 0.5,3) {\vdots};
\node at ( 3.5,3) {\vdots};
\end{tikzpicture}
\end{center}
The superscript at 12 indicates that there are 3 white beads at that position.
\end{example}



We now introduce certain distinguished Young walls of type $D_n$, and 
a method to obtain them with moving the beads on the abacus. On the Young wall side, define a {\em bar} to be a connected set of blocks
and half-blocks, with each half-block occurring once and each block occurring twice. A Young wall 
$Y\in\mathcal{Z}_\Delta$ will be called a {\em core} Young wall, if no bar can be removed from it without 
violating the Young wall rules. For an example of bar removal, see \cite[Example 5.1(2)]{kang2004crystal}. Let ${\mathcal C}_\Delta\subset\mathcal{Z}_\Delta$ denote the set of all 
core Young walls. 

Based on the calculations of \cite{kang2004crystal, kwon2006affine} the following result was obtained in \cite[Proposition 7.2]{gyenge2015euler}. For completeness we include also its proof.
\begin{proposition}
\label{prop:dncoredecomp}
Given a Young wall 
$Y\in \mathcal{Z}_\Delta$, any complete sequence of bar removals through Young walls results in the same
core $\mathrm{core}(Y)\in {\mathcal C}_\Delta$, defining a map of sets
\[ \mathrm{core}\colon \mathcal{Z}_\Delta\to {\mathcal C}_\Delta.\]
The process can be described on the abacus, respects the decomposition~\eqref{decomp_D_YW}, and results
in a bijection
\begin{equation} \CZ_\Delta  \longleftrightarrow  {\mathcal P}^{n+1}  \times  \CC_\Delta\label{Dpart_biject}\end{equation}
where ${\mathcal P}$ is the set of ordinary partitions.
Finally, there is also a bijection
\begin{equation}\label{Dcore_biject} \CC_\Delta\longleftrightarrow \SZ^n.\end{equation}
\end{proposition}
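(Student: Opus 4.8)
The plan is to reduce the whole statement to bead bookkeeping on the abacus, in the spirit of the type $A$ core--quotient theory, using as already established the basic bijection between Young walls and abacus configurations. The organising numerical fact I would record first is that a bar consists of the four half-blocks once each and the $n-3$ full blocks twice each, hence has total weight $4+2(n-3)=2n-2$; equivalently, removing a bar shifts $\mathbf{wt}(Y)$ by the vector $(d_0,\dots,d_n)$ of Kac marks of $\widetilde{D}_n$ (so by $q$-weight $q=\prod_i q_i^{d_i}$) and shifts $|Y|=\sum_k\lambda_k$ by $2n-2$, a multiple of $n-1$.

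\emph{Step 1: the abacus dictionary.} The first task is to check, by inspecting the Young wall pattern column by column, that every bar removal through Young walls decreases exactly one part of $\lambda(Y)=\mu(Y)\sqcup\nu(Y)$ by $2n-2$ --- i.e.\ on the abacus moves a single bead up by one row within its ruler --- and conversely that every such upward bead move lifts to a bar removal that stays inside $\CZ_\Delta$, that is, preserves (YW1)--(YW4), especially the properness condition (YW4). Since $n-1\mid 2n-2$, an upward move never changes the residue modulo $n-1$ of the ruler index, so uncolored beads stay uncolored and colored beads stay colored; this is precisely the assertion that the process respects the decomposition \eqref{decomp_D_YW}. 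On the two colored rulers $R_{n-1},R_{2n-2}$ I would in addition track the colour of the moved bead and verify that an upward move toggles it in accordance with the four-case rule defining the abacus configuration, governed by the parity of the integer $n_l$.

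\emph{Step 2: confluence and the bijections.} Granting Step 1, moves on distinct rulers commute, and on a fixed ruler the beads cannot overtake one another (for the colored rulers one records in addition the multiplicities and colours along a decreasing sequence of positions), so ``push every bead as high as possible'' is a confluent terminating rewriting. Hence any complete sequence of bar removals from $Y$ terminates at the same abacus configuration --- by definition $\mathrm{core}(Y)$ --- and $Y\in\CC_\Delta$ iff its configuration admits no upward move; this proves the first paragraph of the proposition, gives the well-defined map $\mathrm{core}\colon\CZ_\Delta\to\CC_\Delta$, and shows it respects \eqref{decomp_D_YW}. Next, by the combinatorics of \cite{kang2004crystal,kwon2006affine} the $2n-2$ rulers organise into $n+1$ independent channels (certain distinguished rulers, among them the two colored ones, being read individually, the remaining full-block rulers in conjugate pairs). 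On each channel a finite bead configuration is equivalent, by the classical bijection between bead configurations on a ruler and pairs (charge, partition), to an integer charge together with a partition measuring the total descent of the beads below their fully-pushed-up positions (with multiplicities supplying the extra parts on a colored channel); the $n+1$ partitions assemble to $\mathcal{P}^{n+1}$ and the charges are exactly the data of $\mathrm{core}(Y)$. Equivalently, with $N(Y)$ the number of bars removed, $\mathbf{q}^{\mathbf{wt}(Y)}=q^{N(Y)}\,\mathbf{q}^{\mathbf{wt}(\mathrm{core}(Y))}$ with $N(Y)=\sum_{i}|\lambda_i|$. This yields $\CZ_\Delta\leftrightarrow\mathcal{P}^{n+1}\times\CC_\Delta$. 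Finally, a core Young wall is recovered from its $n+1$ channel charges (for a colored channel, the signed excess of white over black beads); these integers satisfy a single balancing relation --- the analogue of $\sum_i j_i=0$ from type $A$ --- leaving a free lattice of rank $n$, which one identifies with $\SZ^n$, recovering the vector $\mathbf{m}$ of Theorem~\ref{thm:main}(1).

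\emph{The main obstacle.} Everything after Step 1 is essentially formal; the real content --- the part only sketched in \cite{gyenge2016phdthesis} --- is Step 1 itself: matching a bar removal with a single upward bead move and the precise recolouring, and conversely lifting bead moves to bar removals that remain valid Young walls. The delicate points are the colour rule (the four-way case split, and how the parity of $n_l$ changes under a move) and the preservation of properness (YW4). I would handle first the columns whose tops lie away from the half-block rows, where the picture is literally a type $A$ abacus on $2n-2$ runners, and then analyse separately the columns topped by half-blocks, where the doubling and the colours enter.
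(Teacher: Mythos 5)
Your Step 1 --- the claim that every bar removal decreases exactly one part of $\lambda(Y)$ by $2n-2$, i.e.\ moves a single bead up one row on its own ruler, with at most a colour toggle of that bead --- is false in type $D$, and since you correctly identify Step 1 as carrying all the content, this derails the argument. The abacus moves that actually realize bar removals (the paper's (B1)--(B4), taken from \cite{kang2004crystal,kwon2006affine}) are of four kinds: (B1) a bead on an uncoloured ruler moves up one row, but simultaneously the colours of the beads on the coloured rulers $R_{n-1},R_{2n-2}$ at the intervening positions are switched; (B2) two beads at positions $s$ and $2n-2-s$ are removed outright (a bar removal deleting two columns), again switching intervening colours; (B3) a bead on a coloured ruler moves up a full period only under a colour-compatibility condition with the beads above it; (B4) \emph{two} beads at the same coloured position move up by $n-1$ each (half a period), adopt the colour already present at the target, and can disappear at the imaginary position $0$. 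So a single bar removal may shorten $\lambda$ by two parts, may move two beads by $n-1$ rather than one bead by $2n-2$, and moves in the full-column part interact with the colouring of the half-block part. None of this is captured by ``one bead, one row''; in particular your commutation/no-overtaking argument for confluence, and the ruler-by-ruler (charge, partition) bookkeeping, do not apply as stated, so Step 2 is not ``essentially formal'' either.

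The structural conclusion is also misdescribed. In the paper's proof the $n+1$ quotient partitions do not come from $n+1$ independent channels whose charges satisfy one balancing relation: the $n-2$ conjugate pairs $(R_s,R_{2n-2-s})$ with $s\not\equiv 0 \bmod (n-1)$ contribute one partition each together with one unconstrained integer $z_s$ (difference of bead counts within the pair), while the two coloured rulers together contribute a \emph{triple} of partitions (Kwon's Proposition 3.6) and a residual core datum consisting of a pair of ``pyramid'' configurations, classified by $\SZ^2$. Hence $\CC_\Delta\leftrightarrow\SZ^{n-2}\times\SZ^{2}$ directly, with no analogue of the type $A$ relation $\sum_i j_i=0$. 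To repair your proof you would need to replace Step 1 by the moves (B1)--(B4), prove (or cite) that they are exactly the bar removals through Young walls and that the procedure is confluent despite the colour switching and the boundary behaviour at position $0$, and then parameterize the reduction data by $\mathcal{P}^{n-2}\times\mathcal{P}^{3}$ and the cores by $\SZ^{n-2}\times\SZ^{2}$ --- which is precisely the route the paper takes.
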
 
\begin{proof} Decompose $Y$ into a pair of Young walls $(Y_1, Y_2)$ as above. Let us first consider $Y_1$. 
On the corresponding rulers $R_k$, $k \not\equiv 0\; \textrm{ mod}\; (n-1)$,
the following steps correspond to bar removals \cite[Lemma 5.2]{kang2004crystal}.
\begin{enumerate}
 \item[(B1)] If $b$ is a bead at position $s>2n-2$, and there is no bead at position $(s-2n+2)$, then move $b$ 
one position up and switch the color of the beads at 
positions $k$ with $k \equiv 0\; \textrm{ mod}\; (n-1)$, $s-2n+2 < k < s$.
 \item[(B2)] If $b$ and $b'$ are beads at position $s$ and $2n-2-s$ ($1 \leq s \leq n-2$) respectively, then 
remove $b$ and $b'$ and switch the color of the beads 
at positions $k \equiv 0\; \textrm{ mod}\; (n-1), s< k < 2n-2-s$.
\end{enumerate}
Performing these steps as long as possible results in a configuration of beads on the rulers $R_k$ with 
$k \not\equiv 0\; \textrm{ mod}\; (n-1)$ with no gaps from above, and for $1 \leq s \leq n-2$, beads on only one
of $R_s$, $R_{2n-2-s}$. This final configuration can be uniquely described by an ordered set of 
integers $\{z_1, \ldots, z_{n-2}\}$, $z_s$ being the number of beads on $R_s$ minus the number of beads 
on $R_{2n-2-s}$ \cite[Remark 3.10(2)]{kwon2006affine}. In the correspondence \eqref{Dcore_biject} this gives $\SZ^{n-2}$. It turns out that the reduction steps in this part of the algorithm can be encoded 
by an $(n-2)$-tuple of ordinary partitions, with the summed weight of these 
partitions equal to the number of bars removed \cite[Theorem 5.11(2)]{kang2004crystal}. 

Let us turn to $Y_2$, represented on the rulers $R_k$, $k \equiv 0\; \textrm{ mod}\; (n-1)$. 
On these rulers the following steps correspond to bar removals \cite[Sections 3.2 and 3.3]{kwon2006affine}.
\begin{enumerate} 
\item[(B3)] Let $b$ be a bead at position $s\geq 2n-2$. If there is no bead at position $(s-n+1)$, and the beads at position $(s-2n+2)$ are of the same color as $b$, then shift $b$ up to position $(s-2n+2)$.
 \item[(B4)] If $b$ and $b'$ are beads at position $s\geq n-1$, then move them up to position $(s-n+1)$. If $s-n+1>0$ and this position already contains beads, then $b$ and $b'$ take that same color.
\end{enumerate}
During these steps, there is a boundary condition: there is an imaginary position $0$ in the rightmost column, 
which  is considered to contain invisible white beads; placing a bead there means 
that this bead disappears from the abacus. 
It turns out that the reduction steps in this part of the algorithm can be described 
by a triple of ordinary partitions, again with the summed weight of these 
partitions equal to the number of bars removed \cite[Proposition 3.6]{kwon2006affine}. On the other hand, the final result can be encoded by a pair of ordinary partitions, or Young diagrams, which have the additional property of being a pyramid. 

The different bar removal steps (B1)-(B4) construct the map $c$ algorithmically and uniquely. The stated 
facts about parameterizing the steps prove the existence of the bijection~\eqref{Dpart_biject}. 
To complete the proof of~\eqref{Dcore_biject}, we only need to remark further that the set  
of ordinary Young diagrams having the shape of a pyramid is in bijection with the set of integers.
(see \cite[Remark 3.10(2)]{kwon2006affine}). This gives the remaining $\SZ^{2}$ factor in the bijection \eqref{Dcore_biject}.
\end{proof}

\begin{example}
A possible sequence of bar removals on the abacus and Young wall of Examples \ref{ex:ywall} and \ref{ex:abacusY} is as follows. Perform step (B1) on the beads at positions 7, 8, 10, 11. Perform step (B2) on the pairs of beads at positions (1,5) and (2,4). Perform step (B4) four times on two beads at position 12 by moving them consecutively to positions 9, 6 (where they take the color black), 3 and then 0 (which means removing them from the abacus). The resulting abacus configuration is then
\begin{center}
\begin{tikzpicture}

\draw (0,5) node {$R_1$};
\draw (1,5) node {$R_{2}$};
\draw (2,5) node {$R_{3}$};
\draw (3,5) node {$R_{4}$};
\draw (4,5) node {$R_{5}$};
\draw (5,5) node {$R_{6}$};

\draw (-0.5,4.8) -- (5.5,4.8);
\draw (2.5,5.3) -- (2.5,3);
\draw (1.5,5.3) -- (1.5,3);
\draw (4.5,5.3) -- (4.5,3);
\draw (5.5,5.3) -- (5.5,3);

\node at ( 0,4.5)  {1};
\node at ( 1,4.5)  {2};
\node at ( 2,4.5)  {3};
\node at ( 3,4.5)  {4};
\node at ( 4,4.5)  {5};
\node at ( 5,4.5) [circle,draw,inner sep=0pt,minimum size=12pt,fill=gray!50] {6};
\node at ( 0,4) {7};
\node at ( 1,4) {8};
\node at ( 2,4) {9};
\node at ( 3,4) {10};
\node at ( 4,4) {11};
\node at ( 5,4) [circle,draw,inner sep=0pt,minimum size=12pt] {12};
\node at ( 0.5,3.5) {\vdots};
\node at ( 3.5,3.5) {\vdots};
\end{tikzpicture}
\end{center}
This configuration describes the following core Young wall:
\begin{center}
	\begin{tikzpicture}[scale=0.6, font=\footnotesize, fill=black!20]
	
	\draw (0, 0) -- (0,8) -- (1,9)--(1,0);
	\draw (2,5)--(1,5);
	\draw (0, 8) -- (1,8);
	\draw (0, 7) -- (1,7);
	\draw (0, 6) -- (1,6);
	\draw (0, 5) -- (1,5);
	\draw (0, 4) -- (2,4);
	\draw (0, 3) -- (2,3);
	\draw (0, 2) -- (2,2);
	\draw (0, 1) -- (2,1);
	\draw (2, 0) -- (2,4);
	
	\draw (0,0) -- (3.3,0);
	\foreach \x in {0,1}
	{
		\draw (\x,2) -- (\x+1,3);
		\draw (\x,4) -- (\x+1,5);
		\draw (\x+0.5,1.5) node {2};
		\draw (\x+0.5,3.5) node {2};
		\filldraw (\x,0) -- (\x+1,1) -- (\x+1,0) -- cycle ;
	}
	\filldraw (2,0) -- (3,1) -- (3,0) -- cycle ;
	\filldraw (1,0) -- (2,1) -- (2,0) -- cycle ;
	\draw (2.75,0.35) node {1};
	\foreach \x in {0}
	{
		\draw (\x+0.25,0.65) node {0};
		\draw (\x+0.75,0.35) node {1};
		\draw (\x+0.25,2.65) node {3};
		\draw (\x+0.75,2.35) node {4};
		\draw (\x+0.25,4.65) node {0};
		\draw (\x+0.75,4.35) node {1};
	}
	\draw (1.25,0.65) node {1};
	\draw (1.75,0.35) node {0};
	\draw (1.25,2.65) node {4};
	\draw (1.75,2.35) node {3};
	\draw (1.25,4.65) node {1};
	
	\foreach \x in {0}
	{
		\draw (\x+0.25,6.65) node {3};
		\draw (\x+0.75,6.35) node {4};
		\draw (\x+0.75,8.35) node {1};
		\draw (\x,6) -- (\x+1,7);
	}

	\foreach \x in {0}
	{
		\draw (\x+0.5,5.5) node {2};
		\draw (\x+0.5,7.5) node {2};
	}
	\draw (3.8,0.5) node {\dots};
	\end{tikzpicture}
\end{center}
\end{example}

\section{Enumeration of Young walls}
\label{seq:enumyw}

We next determine the multi-weight of a Young wall $Y$ in terms 
of the bijections \eqref{Dpart_biject}-\eqref{Dcore_biject}. 
The quotient part is easy: the multi-weight of each bar is $(1,1,2,\ldots, 2, 1,1)$,  
so the $(n+1)$-tuple of partitions contributes a factor of
\[ \left(\prod_{m=1}^{\infty}(1-q^m)^{-1}\right)^{n+1}. \]
Turning to 
cores, under the bijection $\CC_\Delta\leftrightarrow \SZ^n$, the total weight of a core Young wall $Y\in \CC_\Delta$ corresponding 
to $(z_1, \ldots, z_n)\in\SZ^n$ is calculated in \cite[Remark 3.10]{kwon2006affine}: 
\begin{equation} 
\label{eq:dncoreweight}
|Y|=\frac{1}{2}\sum_{i=1}^{n-2} \left((2n-2)z_i^2-(2n-2i-2)z_i\right)+(n-1)\sum_{i=n-1}^n\left(2z_{i}^2+z_{i}\right).
\end{equation}
The next result gives a refinement of this formula for the multi-weight of $Y$.
\begin{theorem} 
\label{thm:orbiserdn}	
Let $q=q_0q_1q_2^2\dots q_{n-2}^2q_{n-1}q_n$, corresponding to a single bar. 
\begin{enumerate}
\item Composing the bijection~\eqref{Dcore_biject} with an appropriate $\SZ$-change of coordinates in the lattice~$\SZ^n$, 
the multi-weight of a core Young wall $Y\in\CC_\Delta$ corresponding to an element $\mathbf{m}=(m_1,\dots,m_n) \in \SZ^n$
can be expressed as
\[q_1^{m_1}\cdot\dots\cdot q_n^{m_n}(q^{1/2})^{\mathbf{m}^\top \cdot C \cdot \mathbf{m}},\]
where $C$ is the Cartan matrix of type $D_n$.

\item The multi-weight generating series 
\[ Z_{\Delta}(q_0,\dots,q_n) = \sum_{Y\in \CZ_\Delta} \mathbf{q}^{\mathbf{wt}(Y)}\]
of Young walls for $\Delta$ of type $D_n$ can be written as
\begin{equation*} Z_{\Delta}(q_0,\dots,q_n)=\frac{\displaystyle\sum_{ \mathbf{m}=(m_1,\dots,m_n) \in \SZ^n }^\infty q_1^{m_1}\cdot\dots\cdot q_n^{m_n}(q^{1/2})^{\mathbf{m}^\top \cdot C \cdot \mathbf{m}}}{\displaystyle\prod_{m=1}^{\infty}(1-q^m)^{n+1}}.
\end{equation*}

\item The following identity is satisfied between the coordinates $(m_1,\dots,m_n)$ and $(z_1, \ldots, z_n)$ on~$\SZ^n$:
\begin{equation*} 
\sum_{i=1}^n m_i = -\sum_{i=1}^{n-2}(n-1-i)z_i-(n-1)c(z_{n-1}+z_n)-(n-1)b.
\end{equation*}
Here $z_1+\dots+z_{n-2}=2a-b$ for integers $a \in \SZ$, $b \in \{0,1\}$, and $c=2b-1 \in \{-1,1\}$.
\end{enumerate}
\end{theorem}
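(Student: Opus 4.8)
The plan is to derive all three parts from the core decomposition of Proposition~\ref{prop:dncoredecomp}, by refining the total--weight formula~\eqref{eq:dncoreweight} to the full multi--weight of a core Young wall.

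Under the bijection~\eqref{Dpart_biject} the multi--weight is additive, and each bar carries multi--weight $(1,1,2,\dots,2,1,1)$, i.e.\ contributes the monomial $q=q_0q_1q_2^2\cdots q_{n-2}^2q_{n-1}q_n$; hence an element $(\lambda_0,\dots,\lambda_n;Y)\in\mathcal{P}^{n+1}\times\CC_\Delta$ has $\mathbf q^{\mathbf{wt}}=q^{\,|\lambda_0|+\dots+|\lambda_n|}\,\mathbf q^{\mathbf{wt}(Y)}$, so the $\mathcal{P}^{n+1}$ factor contributes $\bigl(\sum_{\lambda\in\mathcal{P}}q^{|\lambda|}\bigr)^{n+1}=\prod_{m\ge1}(1-q^m)^{-(n+1)}$ and
\[ Z_\Delta(q_0,\dots,q_n)=\Bigl(\prod_{m\ge1}(1-q^m)^{-1}\Bigr)^{n+1}\cdot\sum_{Y\in\CC_\Delta}\mathbf q^{\mathbf{wt}(Y)}. \]
Thus (2) follows once (1) identifies $\sum_{Y\in\CC_\Delta}\mathbf q^{\mathbf{wt}(Y)}$ with the type-$D_n$ theta series $\sum_{\mathbf m\in\SZ^n}q_1^{m_1}\cdots q_n^{m_n}(q^{1/2})^{\mathbf m^\top C\mathbf m}$, and (3) becomes an accounting statement relating the two systems of coordinates on $\CC_\Delta\cong\SZ^n$.

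For (1) I would first compute each $wt_j(Y)$, $0\le j\le n$, as an explicit quadratic polynomial in the coordinates $\mathbf z=(z_1,\dots,z_n)$ of~\eqref{Dcore_biject}, working on the abacus: a single column of height $h$ in the type-$D_n$ pattern contains a number of label-$j$ blocks which is linear in $h$ up to a bounded periodic correction, and the reduced core configuration attached to $\mathbf z$ is an explicit union of such columns on the paired rulers $R_s,R_{2n-2-s}$ ($1\le s\le n-2$) and on the coloured rulers $R_{n-1},R_{2n-2}$; summing over its columns gives the $wt_j$'s, with the involution $0\leftrightarrow1$, $(n-1)\leftrightarrow n$ (tensoring with the sign representation) halving the work. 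Comparing the $q_0$--, $q_1$--, $q_i$-- ($2\le i\le n-2$), $q_{n-1}$-- and $q_n$--exponents, the only reparametrization $\mathbf m$ that can make $\mathbf q^{\mathbf{wt}(Y)}=q_1^{m_1}\cdots q_n^{m_n}q^{wt_0}$ is
\[ m_1=wt_1-wt_0,\quad m_i=wt_i-2\,wt_0\ \ (2\le i\le n-2),\quad m_{n-1}=wt_{n-1}-wt_0,\quad m_n=wt_n-wt_0. \]
One then checks that $\mathbf z\mapsto\mathbf m$ is a bijection $\SZ^n\to\SZ^n$, and verifies the quadratic identity $\mathbf m^\top C\mathbf m=2\,wt_0$ --- which converts $q^{wt_0}$ into $(q^{1/2})^{\mathbf m^\top C\mathbf m}$ --- by substituting the formulas above and completing the square in $\mathbf z$, recognising the result as the $D_n$ Cartan form. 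Part~(3) then follows: summing the defining relations gives $\sum_{i=1}^n m_i=|Y|-(2n-2)\,wt_0$, into which one feeds~\eqref{eq:dncoreweight} and the computed $wt_0$; the quadratic parts cancel and leave the asserted linear expression, the dichotomy $b\in\{0,1\}$, $c=2b-1$ recording the parity of $z_1+\dots+z_{n-2}$.

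The main obstacle is the half-block part $Y_2$. Its rulers $R_{n-1},R_{2n-2}$ carry coloured beads, the reduction to a pair of ``pyramids'' involves colour switching and the boundary convention of an invisible column of white beads at position~$0$, and as a consequence $wt_0,wt_1,wt_{n-1},wt_n$ --- hence the map $\mathbf z\mapsto\mathbf m$ --- are only \emph{piecewise}-affine in $\mathbf z$: there is a parity obstruction, since the total--weight form equals $(n-1)\,\mathrm{diag}(1,\dots,1,2,2)$ in the $\mathbf z$--coordinates but $\mathbf m\mapsto(n-1)\,\mathbf m^\top C\mathbf m$ in the $\mathbf m$--coordinates, and these are inequivalent over $\SZ$ (the latter represents only integer multiples of $2(n-1)$, while the former represents $n-1$), so no single $\mathrm{GL}_n(\SZ)$--affine change of variables can work. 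The real content is therefore to split $\CC_\Delta$ according to $b=(z_1+\dots+z_{n-2})\bmod 2$, to exhibit on each piece an affine map to a sublattice of the $\mathbf m$--space with the two images partitioning $\SZ^n$, and to check $\mathbf m^\top C\mathbf m=2\,wt_0$ on each piece so that the two halves assemble into a single $D_n$ theta series. The full-column part $Y_1$ is comparatively routine --- it is the type-$A$ core bookkeeping of \cite{fujii2005combinatorial} adapted to the paired rulers.
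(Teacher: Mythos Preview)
Your plan is the paper's: factor off the $(n{+}1)$-tuple of partitions, compute the coloured weight of a core in the $z$-coordinates, pass to $\mathbf m$-coordinates, and identify the $D_n$ Cartan form. Your observation that one is forced to take $m_i=wt_i-d_i\,wt_0$ (so that the target identity becomes $\mathbf m^\top C\,\mathbf m=2\,wt_0$) is exactly what the paper does after Lemma~\ref{lem:dncorecolorweight}, and your diagnosis of the parity obstruction---no single $\mathrm{GL}_n(\SZ)$ map works, so one must split by $b=(z_1+\dots+z_{n-2})\bmod 2$---is precisely the paper's $a_I,b_I,c_I$ bookkeeping.

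The only substantive difference is in how the coloured core weight is obtained. You propose direct column-counting on the abacus; the paper instead argues (Lemma~\ref{lem:dncorecolorweight}) that each $wt_j$ is at most quadratic in the $z_i$ because it is convex, increasing, and bounded by the known quadratic $|Y|$, and then pins the formula down by evaluating on the one-parameter slices $z_i\neq 0$, $z_{j\neq i}=0$ and on the two-parameter slices to fix the cross-terms. Either method works; the paper's avoids summing the periodic corrections you allude to. Where you write ``verify by completing the square'', the paper does not complete any square: it writes out $\sum m_i^2-\sum_{i\sim j}m_im_j$ in the $z$-variables and reduces it to the exponent of $q$ by several pages of explicit algebra, including an auxiliary telescoping identity (Lemma~\ref{lem:abc}) and an induction on $n$. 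Your outline is correct, but these computations are the actual content of the proof and are not yet supplied.
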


Statement (2) clearly follows from (1) and the discussion preceding Theorem \ref{thm:orbiserdn}.  Statement (3) is used to achieve additional results in \cite{gyenge2015euler}.

Let us write $z_I=\sum_{i \in I}z_i$ for $I\subseteq\{1,\dots,n-2\}$. Each such number decomposes uniquely as $z_I=2a_I-b_I$, where $a_I \in \SZ$ and $b_I \in \{0,1\}$. Let us introduce also $c_I=2b_I-1 \in \{-1,1\}$. We will make use of the relations
\[a_I=\sum_{i\in I}a_i-\sum_{i_1\in I, i_2 \in I\setminus\{i_1\}}b_{i_1}b_{i_2}+\sum_{i_1\in I, i_2 \in I\setminus\{i_1\},i_3 \in I\setminus\{i_1,i_2\}}2b_{i_1}b_{i_2}b_{i_3}-\dots\;, \]
\[b_I=\sum_{i\in I}b_i-\sum_{i_1\in I, i_2 \in I\setminus\{i_1\}}2b_{i_1}b_{i_2}+\sum_{i_1\in I, i_2 \in I\setminus\{i_1\},i_3 \in I\setminus\{i_1,i_2\}}4b_{i_1}b_{i_2}b_{i_3}-\dots\;. \]
To simplify notations let us introduce
\[r_I:=a_I-\sum_{i\in I}a_i=-\sum_{i_1\in I, i_2 \in I\setminus\{i_1\}}b_{i_1}b_{i_2}+\sum_{i_1\in I, i_2 \in I\setminus\{i_1\},i_3 \in I\setminus\{i_1,i_2\}}2b_{i_1}b_{i_2}b_{i_3}-\dots\;.\]

Using these notations the colored refinement of the weight formula \eqref{eq:dncoreweight} is the following.
\begin{lemma}
\label{lem:dncorecolorweight}
Given a core Young wall $Y\in\CC_\Delta$ corresponding to $(z_i)\in \SZ^n$ in the bijection of~\eqref{Dcore_biject}, 
its content is given by the formula
\begin{gather*}
\mathbf{q}^{\mathbf{wt}(Y)}=q_1^{-\sum_{i=1}^{n-2}b_i}q_2^{-2a_1-\sum_{i=2}^{n-2}b_i}\dots q_{n-2}^{-\sum_{i=1}^{n-3}2a_i-b_{n-2}}(q_0q_1^{-1}q_{n-1}q_n)^{-\sum_{i=1}^{n-2}a_i} (q_0q_1^{-1})^{a_{1\dots n-2}}\\
\cdot q^{\frac{1}{2}\sum_{i=1}^{n-2}(z_i^2+b_i)+z_{n-1}^2+z_n^2 } \\\cdot (q^{b_{1\dots n-2}} (q_1^{-1}\dots q_{n-2}^{-1} q_{n-1}^{-1})^{c_{1\dots n-2}})^{z_{n-1}}(q^{b_{1\dots n-2}} (q_1^{-1}\dots q_{n-2}^{-1} q_{n}^{-1})^{c_{1\dots n-2}})^{z_{n}}.
\end{gather*}
\end{lemma}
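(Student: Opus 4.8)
The plan is to compute $\mathbf{q}^{\mathbf{wt}(Y)}$ through the decomposition~\eqref{decomp_D_YW}: writing $Y=Y_1\sqcup Y_2$ with $Y_1\in\CZ^f_\Delta$ (full columns, recorded on the rulers $R_k$, $k\not\equiv 0\bmod(n-1)$) and $Y_2\in\CZ^h_\Delta$ (columns ending in a single half-block, recorded on $R_{n-1}$ and $R_{2n-2}$), one has $\mathbf{q}^{\mathbf{wt}(Y)}=\mathbf{q}^{\mathbf{wt}(Y_1)}\cdot\mathbf{q}^{\mathbf{wt}(Y_2)}$ since $\mathbf{wt}$ merely counts white blocks by label. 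I expect the first factor to be the first line of the asserted formula together with $q^{\frac12\sum_{i=1}^{n-2}(z_i^2+b_i)}$, and the second factor the remaining part $q^{z_{n-1}^2+z_n^2}$ times the two $z_{n-1}$- and $z_n$-exponentials. For each factor I would use the explicit description of the core abacus configuration obtained in the proof of Proposition~\ref{prop:dncoredecomp}, read off the heights of the columns, and evaluate the multi-weight of a single column from the periodicity of the Young wall pattern of type $D_n$: one period of a column is exactly one bar, hence carries the weight $q=q_0q_1q_2^2\cdots q_{n-2}^2q_{n-1}q_n$, the only correction being that the bottom-most half-block of every column is grey and therefore weightless. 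Summing the resulting (essentially geometric) series over the columns and simplifying with the identities for $a_I,b_I,r_I$ stated just before the lemma should produce the two factors; throughout, the scalar specialization $q_0=\dots=q_n=t$ must recover $t^{|Y|}$ with $|Y|$ as in~\eqref{eq:dncoreweight}, a convenient running check.

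For $Y_1$: after the reductions (B1), (B2) the beads on $R_s$ and $R_{2n-2-s}$ ($1\le s\le n-2$) are packed to the top, with $\max(z_s,0)$ on $R_s$ at positions $s,\,s+(2n-2),\,s+2(2n-2),\dots$ and $\max(-z_s,0)$ on $R_{2n-2-s}$, and a bead at position $p$ is a full column of height $p$. Because the height of a full column is $\not\equiv0\bmod(n-1)$, every half-block level such a column reaches is reached completely, contributing the symmetric monomials $q_0q_1$, resp.\ $q_{n-1}q_n$; the sole asymmetry is the grey bottom half-block, whose white partner is a $0$ for an even column and a $1$ for an odd column. Thus the multi-weight of a full column depends on its height modulo $2n-2$ and on its parity only through a single $q_0$-versus-$q_1$ unit, and since distinct heights force the columns of $Y_1$ to alternate in parity, the cumulative effect of these units over all $\sum_i|z_i|$ columns is controlled by the parity $b_{1\dots n-2}$ of that number and produces exactly the factor $(q_0q_1^{-1})^{a_{1\dots n-2}}$. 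The remaining $q$- and $q_j$-powers come from summing the geometric series ruler by ruler; with the $a_I,b_I,r_I$-substitutions this reproduces the first line of the formula.

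For $Y_2$: on $R_{n-1}$ and $R_{2n-2}$ the core configuration is the pair of ``pyramid'' diagrams encoded by $(z_{n-1},z_n)$, and a bead at position $p$ is a column of height $p\equiv0\bmod(n-1)$ ending in one half-block whose label is fixed by the bead's colour \emph{together with} the parity $n_l$ of the number of strictly shorter full columns. Those shorter full columns are precisely the columns of $Y_1$, so the relevant parity is governed by $z_1+\dots+z_{n-2}$, i.e.\ by $b_{1\dots n-2}$ and the sign $c_{1\dots n-2}$: concretely, passing from the abacus data to the actual top labels shifts the effective height of each half-topped column by $b_{1\dots n-2}$ and twists the $(n-1)\leftrightarrow n$ choice by $c_{1\dots n-2}$. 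Computing the per-column multi-weights by the same periodicity argument and summing over the two pyramids then gives $q^{z_{n-1}^2+z_n^2}$ times $\big(q^{b_{1\dots n-2}}(q_1^{-1}\cdots q_{n-2}^{-1}q_{n-1}^{-1})^{c_{1\dots n-2}}\big)^{z_{n-1}}$ and its $z_n$-analogue, completing the proof after one last comparison with~\eqref{eq:dncoreweight}.

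I expect the main obstacle to be this bookkeeping of half-block labels rather than any single hard idea: one must keep straight simultaneously (i) the weightless grey bottom half-block, (ii) the even/odd alternation of columns and the induced swaps $q_0\leftrightarrow q_1$ and $q_{n-1}\leftrightarrow q_n$, and (iii) the coupling between the $Y_1$-data and the colours of the $Y_2$-beads through the integers $n_l$, equivalently through $b_I$ and $c_I$. In particular the interleaving of $Y_1$- and $Y_2$-columns by height inside $Y$ moves $q_0$-versus-$q_1$ units between the two halves of the product, and one has to check that after all cancellations the $Y_1$-contribution genuinely depends only on $z_1,\dots,z_{n-2}$ while the $Y_2$-contribution depends only on $z_{n-1},z_n,b_{1\dots n-2},c_{1\dots n-2}$; the relation $wt_0(Y_1)-wt_1(Y_1)=b_{1\dots n-2}$ is the simplest instance of the kind of identity that must come out right. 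The uncoloured inputs — the shape of the core configurations, the partition and pyramid parametrisations, and the total-weight formula~\eqref{eq:dncoreweight} — may be quoted from \cite{kang2004crystal,kwon2006affine} and the proof of Proposition~\ref{prop:dncoredecomp}, so the entire task is to promote those computations to the multi-graded level.
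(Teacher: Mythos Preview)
Your plan is coherent and could be made to work, but the paper takes a genuinely different route. Rather than summing column-by-column over the core abacus, the paper first argues that each $wt_i(Y)$ is a quadratic function of $z_1,\dots,z_n$: since $0\le wt_i(Y)\le|Y|$ with $|Y|$ quadratic by~\eqref{eq:dncoreweight}, and since (by inspection of how the $z_i$ parameterise cores) each $wt_i$ is convex and increasing, each $wt_i$ is itself at most quadratic. A degree-two function of the $z_i$ is determined by its values when (a) exactly one $z_i$ is nonzero and (b) exactly two of them are; these small cases are computed directly from the Young wall pattern, and the pairwise ``correction terms'' in case~(b) are precisely the origin of the factors $(q_0q_1^{-1})^{a_{1\dots n-2}}$ and $\big(q^{b_{1\dots n-2}}(q_1^{-1}\cdots q_{n-2}^{-1}q_j^{-1})^{c_{1\dots n-2}}\big)^{z_j}$ in the final formula.

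Your approach trades this interpolation shortcut for honest bookkeeping over the full core abacus configuration. The hazard you flag --- that the $q_0$-versus-$q_1$ unit at the bottom of every column, and the top half-block label of every non-full column, depend on column parity \emph{inside} $Y$ and hence couple $Y_1$ to $Y_2$ --- is real, and your assertion that after cancellation the $Y_1$-contribution depends only on $z_1,\dots,z_{n-2}$ is not automatic: in the paper's language it is equivalent to the fact that the case-(b) correction for a pair $1\le i\le n-2$, $j\in\{n-1,n\}$ is linear in $z_j$ and can therefore be absorbed into the last line. The interpolation principle makes that structure (and the absence of any $z_{n-1}z_n$ correction) transparent, whereas in your direct computation you would have to discover it while simultaneously tracking all the parity swaps. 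What you gain is that you never invoke the somewhat informal convexity step; what you lose is the organising principle that keeps the number of cases finite and the cancellations visible.
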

When forgetting the coloring a straightforward check shows that Lemma \ref{lem:dncorecolorweight} gives back \eqref{eq:dncoreweight}. Notice also that $z_i^2+b_i=4a_i^2-4a_ib_i+2b_i$ is always an even number, so the exponents are always integers. 
\begin{proof}[Proof of Lemma \ref{lem:dncorecolorweight}]
Suppose that we restrict our attention to blocks of color $i$ by substituting $q_j=1$ for $j\neq i$. 
Clearly,
\[ wt_i(Y) \leq \sum_j wt_j(Y) =|Y| \]
where $|Y|$ is the total weight of $Y$. 
This inequality is true for each $0\leq i \leq n$, and $|Y|$ is a linear combination of the parameters $\{z_i\}_{1 \leq i \leq n}$, their squares and a constant.
It follows from the definition of the $\{z_i\}_{1\leq i \leq n}$ that each $wt_i$
is a convex, increasing function of them. These imply that, when considered over the reals, each
$wt_i$ are at most quadratically growing, convex analytic functions of $\{z_i\}_{1\leq i \leq n}$. As
a consequence, each $wt_i$ is again a linear combination of constants, the parameters $\{z_i\}_{1\leq i \leq n}$ and
their products. It is enough to check that the claimed formula is correct in two cases:
\begin{enumerate}
\item when any of the $z_i$'s is set to a given number and the others are fixed to 0; and
\item when all of the parameters are fixed to 0 except for an arbitrary pair $z_i$ and $z_j$, $i\neq j$. 
\end{enumerate}

First, consider that $z_i\neq 0$ for a fixed $i$, and $z_j=0$ in case $j\neq i$.
\begin{enumerate}
\item[(a)] When $1 \leq i \leq n-2$, then the colored weight of the corresponding core Young wall is 
\[(q_1 \dots q_i)^{-b_i}(q_{i+1}^2 \dots q_{n-2}^2q_{n-1}q_n)^{-a_i}q^{2a_i^2-2a_ib_i+b_i}\;.\] 
\item[(b)] When $i \in \{n-1, n\}$, then the associated core Young wall has colored weight 
\[q^{z_i^{2}}(q_1 q_2\dots q_{n-2} q_{i})^{z_i}\;.\]
\end{enumerate}
Both of these follow from (\ref{eq:dncoreweight}) and its proof in \cite{kwon2006affine} by taking into account the colors of the blocks in the pattern.

Second, assume that $z_i$ and $z_j$ are nonzero, but everything else is zero. Then the total weight is not the product of the two individual weights, but some correction term has to be introduced. The particular cases are:
\begin{enumerate}
\item[(a)] $1 \leq i,j \leq n-2$. There can only be a difference in the numbers of $q_0$'s and $q_1$'s which comes from the fact that in the first row there are only half blocks with 0's in the odd columns and 1's in the even columns. Exactly $-r_{ij}$ blocks change color from 0 to 1 when both $z_i$ and $z_j$ are nonzero compared to when one of them is zero. In general, this gives the correction term $(q_0q_1^{-1})^{r_{1\dots n-2}}=(q_0q_1^{-1})^{a_{1\dots n-2}-\sum_{i=1}^{n-2}a_i}$.
\item[(b)] $1 \leq i \leq n-2$, $j \in \{n-1,n\}$. For the same reason as in the previous case the parity of $z_i$ modifies the colored weight of the contribution of $z_j$, but not the total weight of it. If $z_i$ is even, then the linear term of the contribution of $z_j$ is $q_1 q_2\dots q_{n-2} q_{j}$. In the odd case it is $q_0 q_2\dots q_{n-2} q_{\kappa(j)}$. This is encoded in the correction term $(q^{b_{1\dots n-2}} (q_1^{-1}\dots q_{n-2}^{-1} q_{j}^{-1})^{c_{1\dots n-2}})^{z_j}$.
\item[(c)] $i=n-1$, $j=n$. $z_{n-1}$ and $z_{n}$ count into the total colored weight completely independently, so no correction term is needed.
\end{enumerate}

Putting everything together gives the claimed formula for the colored weight of an arbitrary core Young wall.

\end{proof}


Now we turn to the proof of Theorem \ref{thm:orbiserdn}. After recollecting the terms in the formula of Lemma \ref{lem:dncorecolorweight} it becomes
\begin{gather*}q_1^{-b_{1\dots n-2}-c_{1\dots n-2}(z_{n-1}+z_n)}\prod_{i=2}^{n-2}q_i^{-2a_{1\dots i-1}+c_{1\dots i-1}b_{i\dots n-2}-c_{1\dots n-2}(z_{n-1}+z_n)} \\
\cdot q_{n-1}^{-a_{1\dots n-2}-c_{1\dots n-2}z_{n-1}}q_{n}^{-a_{1\dots n-2}-c_{1\dots n-2}z_{n}} \\
\cdot q^{\sum_{i=1}^{n-2} (2a_i^2-2a_ib_i+b_i)+b_{1\dots n-2} z_{n-1}+z_{n-1}^2+b_{1\dots n-2} z_{n}+z_{n}^2+r_{1\dots n-2}}
\end{gather*}

Let us define the following series of integers:
\[
\begin{array}{r c l}
m_1 & = & -b_{1\dots n-2}-c_{1\dots n-2}(z_{n-1}+z_n)\;, \\
m_2 & = & -2a_1+c_1b_{2\dots n-2}-c_{1\dots n-2}(z_{n-1}+z_n)\;, \\
& \vdots & \\
m_{n-2} & = & -2a_{1 \dots n-3}+c_{1\dots n-3}b_{n-2}-c_{1\dots n-2}(z_{n-1}+z_n)\;, \\
m_{n-1} & = & -a_{1 \dots n-2}-c_{1\dots n-2}z_{n-1}\;, \\
m_{n} & = & -a_{1 \dots n-2}-c_{1\dots n-2}z_n \;.
\end{array}
\]
It is an easy and enlightening task to verify that the map
\[ \SZ^n \rightarrow \SZ^n, \quad (z_1,\dots,z_n) \mapsto (m_1,\dots,m_n)\]
is a bijection, which is left to the reader.

\begin{proof}[Proof of Theorem \ref{thm:orbiserdn}] (1): One has to check that
\begin{multline*} \sum_{i=1}^n m_i^2-m_1m_2-m_2m_3-\dots-m_{n-2}(m_{n-1}+m_n)=\\
=\sum_{i=1}^{n-2} (2a_i^2-2a_ib_i+b_i)+b_{1\dots n-2} z_{n-1}+z_{n-1}^2+b_{1\dots n-2} z_{n}+z_{n}^2+r_{1\dots n-2}\;.
\end{multline*}

The terms containing $z_{n-1}$ or $z_n$ on the left hand side are
\begin{gather*}  (n-2)(z_{n-1}+z_n)^2+z_{n-1}^2+z_n^2-(n-3)(z_{n-1}+z_n)^2-z_{n-1}^2-z_n^2-2 z_{n-1}z_n\\
+\left(2b_{1\dots n-2}+\sum_{i=1}^{n-3}2(2a_{1 \dots i}-c_{1\dots i}b_{i+1 \dots n-2})+2a_{1 \dots n-2}\right)c_{1\dots n-2}(z_{n-1}+z_n)\\
-\left(b_{1\dots n-2}+\sum_{i=1}^{n-3}2(2a_{1 \dots i}-c_{1\dots i}b_{i+1 \dots n-2})+2a_{1 \dots n-2}\right)c_{1\dots n-2}(z_{n-1}+z_n)\\
=b_{1\dots n-2} z_{n-1}+z_{n-1}^2+b_{1\dots n-2} z_{n}+z_{n}^2\;,
\end{gather*}
since $b_{1\dots n-2}c_{1\dots n-2}=b_{1\dots n-2}$.

The terms containing neither $z_{n-1}$ nor $z_n$ on the left hand side are
\begin{gather*}
b_{1\dots n-2}+\sum_{i=1}^{n-3}(2a_{1 \dots i}-c_{1\dots i}b_{i+1 \dots n-2})^2+2a_{1 \dots n-2}^2 -b_{1\dots n-2}(2a_1-c_1b_{2\dots n-2}) \\
-\sum_{i=1}^{n-4}(2a_{1 \dots i}-c_{1\dots i}b_{i+1 \dots n-2})(2a_{1 \dots i+1}-c_{1\dots i+1}b_{i+2 \dots n-2}) \\
-2(2a_{1 \dots n-3}-c_{1\dots n-3}b_{n-2})a_{1 \dots n-2}\;.
\end{gather*}
\begin{lemma}
\label{lem:abc}
\[ 2a_{1 \dots i}-c_{1\dots i}b_{i+1 \dots n-2}=\sum_{j=1}^{i}(2a_{j}-b_{j})+b_{1 \dots n-2}\;,\]
\end{lemma}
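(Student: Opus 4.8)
The plan is to prove Lemma~\ref{lem:abc} by induction on $i$, unwinding the definitions of $a_I$, $b_I$, $c_I$ for the initial segments $I=\{1,\dots,i\}$ in terms of the individual $a_j$, $b_j$. Recall $z_{1\dots i}=\sum_{j=1}^i z_j = 2a_{1\dots i}-b_{1\dots i}$ with $b_{1\dots i}\in\{0,1\}$, and $c_{1\dots i}=2b_{1\dots i}-1$; also each $z_j=2a_j-b_j$. Adding these, $\sum_{j=1}^i z_j = \sum_{j=1}^i(2a_j-b_j)$, so the claim $2a_{1\dots i}-c_{1\dots i}b_{i+1\dots n-2}=\sum_{j=1}^i(2a_j-b_j)+b_{1\dots n-2}$ is equivalent to $b_{1\dots i}-c_{1\dots i}b_{i+1\dots n-2}=b_{1\dots n-2}$. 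So the entire content of the lemma reduces to an identity purely about the $\{0,1\}$-valued quantities $b_{1\dots i}$, $b_{i+1\dots n-2}$, $b_{1\dots n-2}$ and the sign $c_{1\dots i}$.

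The key observation is that $b_J$ is simply the parity (mod~$2$) of $z_J=\sum_{j\in J}z_j$, since $z_J=2a_J-b_J$ forces $b_J\equiv z_J\pmod 2$. Hence $b_{1\dots n-2}\equiv b_{1\dots i}+b_{i+1\dots n-2}\pmod 2$, and since all three lie in $\{0,1\}$ this means $b_{1\dots n-2}=b_{1\dots i}\oplus b_{i+1\dots n-2}$ (XOR). The desired identity $b_{1\dots n-2}=b_{1\dots i}-c_{1\dots i}b_{i+1\dots n-2}$ then follows by a two-case check on $b_{1\dots i}\in\{0,1\}$: if $b_{1\dots i}=0$ then $c_{1\dots i}=-1$ and the right side is $0-(-1)b_{i+1\dots n-2}=b_{i+1\dots n-2}$, matching the XOR; if $b_{1\dots i}=1$ then $c_{1\dots i}=1$ and the right side is $1-b_{i+1\dots n-2}$, which is again $b_{1\dots i}\oplus b_{i+1\dots n-2}$. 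Multiplying through by the appropriate factor and adding $2a_{1\dots i}=\sum_{j=1}^i z_j+b_{1\dots i}=\sum_{j=1}^i(2a_j-b_j)+b_{1\dots i}$ recovers the stated form.

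So concretely I would: first record that $b_J\equiv z_J\pmod2$ and therefore $b_{1\dots n-2}=b_{1\dots i}\oplus b_{i+1\dots n-2}$; second, do the two-case check above to rewrite the XOR as $b_{1\dots i}-c_{1\dots i}b_{i+1\dots n-2}$; third, substitute $2a_{1\dots i}=\sum_{j=1}^i(2a_j-b_j)+b_{1\dots i}$ and cancel $b_{1\dots i}$ against itself to reach $\sum_{j=1}^i(2a_j-b_j)+b_{1\dots n-2}$. No real obstacle is expected here — the only mild subtlety is keeping the bookkeeping straight between the "combinatorial" expansions of $a_I$, $b_I$ given before the lemma (which I will not need to expand explicitly) and the "parity" characterization, and confirming the sign conventions $c_I=2b_I-1$ line up in both cases of the check. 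The main work in the surrounding argument is really the later step of feeding this lemma back into the quadratic-form computation; Lemma~\ref{lem:abc} itself is a short sign-chasing exercise.
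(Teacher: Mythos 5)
Your proof is correct, but it goes by a different route than the paper. The paper proves the lemma by induction on $i$: it peels off the last index using the recursions $a_{1\dots i}=a_{1\dots i-1}+a_i-b_{1\dots i-1}b_i$, $b_{1\dots i}=b_{1\dots i-1}+b_i-2b_{1\dots i-1}b_i$ and $c_{1\dots i}=-c_{1\dots i-1}c_i$, arriving after a short chain of manipulations at $2a_{1\dots i}-c_{1\dots i}b_{i+1\dots n-2}=2a_{1\dots i-1}-c_{1\dots i-1}b_{i\dots n-2}+2a_i-b_i$, and concludes by induction. You instead go straight back to the defining decomposition $z_I=2a_I-b_I$ with $b_I\in\{0,1\}$, note that $b_I$ is just the parity of $z_I$, subtract $2a_{1\dots i}=\sum_{j=1}^i(2a_j-b_j)+b_{1\dots i}$ from both sides, and reduce the lemma to the single Boolean identity $b_{1\dots i}-c_{1\dots i}b_{i+1\dots n-2}=b_{1\dots n-2}$, verified by the two-case check on $b_{1\dots i}$. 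This is cleaner and avoids the inclusion--exclusion expansions of $a_I,b_I$ entirely; note in passing that despite your opening sentence no induction is actually needed in your argument. One small practical point: the paper later invokes not only the lemma but also ``the last intermediate expression in its proof'', namely the one-step recursion displayed above, when simplifying the quadratic form in the proof of Theorem \ref{thm:orbiserdn}(1). Your proof does not produce that expression as a by-product, but it is recovered immediately by applying the lemma at $i$ and at $i-1$ and subtracting, so nothing downstream is lost.
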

\begin{proof}
\begin{gather*} 2a_{1 \dots i}-c_{1\dots i}b_{i+1 \dots n-2} \\ =2a_{1\dots i-1}+2a_{i}-2b_{1\dots i-1}b_{i}+c_{1\dots i-1}c_{i}b_{i+1 \dots n-2} \\
=2a_{1\dots i-1}+2a_{i}-2b_{1\dots i-1}b_{i}+2c_{1\dots i-1}b_{i}b_{i+1 \dots n-2}-c_{1\dots i-1}b_{i+1 \dots n-2} \\
=2a_{1\dots i-1}+2a_{i}-b_{i}-c_{1\dots i-1}(b_{i+1 \dots n-2}+b_{i}-2b_{i}b_{i+1 \dots n-2})\\
= 2a_{1\dots i-1}-c_{1\dots i-1}b_{i \dots n-2}+2a_{i}-b_{i}\;,
\end{gather*}
and then use induction.
\end{proof}

Applying Lemma \ref{lem:abc} and the last intermediate expression in its proof to the terms considered above, they simplify to
\begin{gather*}
b_{1\dots n-2}+\sum_{i=1}^{n-3}(2a_{1 \dots i}-c_{1\dots i}b_{i+1 \dots n-2})^2+2a_{1 \dots n-2}^2 -b_{1\dots n-2}(2a_1-c_1b_{2\dots n-2}) \\
-\sum_{i=1}^{n-4}(2a_{1 \dots i}-c_{1\dots i}b_{i+1 \dots n-2})(2a_{1\dots i}-c_{1\dots i}b_{i+1 \dots n-2}+2a_{i+1}-b_{i+1})\\
-2(2a_{1 \dots n-3}-c_{1\dots n-3}b_{n-2})a_{1 \dots n-2} \\
=b_{1\dots n-2}+(2a_{1 \dots n-3}-c_{1\dots n-3}b_{n-2})^2+2a_{1 \dots n-2}^2 -b_{1\dots n-2}(2a_1-c_1b_{2\dots n-2}) \\
-\sum_{i=1}^{n-4}(2a_{1 \dots i}-c_{1\dots i}b_{i+1 \dots n-2})(2a_{i+1}-b_{i+1})-2(2a_{1 \dots n-3}-c_{1\dots n-3}b_{n-2})a_{1 \dots n-2} \\
=b_{1\dots n-2}+(2a_{1 \dots n-3}-c_{1\dots n-3}b_{n-2})(2a_{1 \dots n-3}-c_{1\dots n-3}b_{n-2}-2a_{1 \dots n-2})\\
+2a_{1 \dots n-2}^2 -b_{1\dots n-2}(2a_1-c_1b_{2\dots n-2})-\sum_{i=1}^{n-4}(2a_{1 \dots i}-c_{1\dots i}b_{i+1 \dots n-2})(2a_{i+1}-b_{i+1})\\
=b_{1\dots n-2}+2a_{1 \dots n-2}^2 -b_{1\dots n-2}(2a_1-c_1b_{2\dots n-2}) \\
-\sum_{i=1}^{n-3}(2a_{1 \dots i}-c_{1\dots i}b_{i+1 \dots n-2})(2a_{i+1}-b_{i+1}) \\
=2a_{1 \dots n-2}^2 -b_{1\dots n-2}(2a_1-b_1)-\sum_{i=1}^{n-3}\left(\sum_{j=1}^{i}(2a_{j}-b_{j})+b_{1 \dots n-2}\right)(2a_{i+1}-b_{i+1})\;.
\end{gather*}
Let us denote this expression temporarily as $s_{n-2}$. Taking into account that 
\[a_{1 \dots n-2}=a_{1\dots n-3}+a_{n-2}-b_{1 \dots n-3}b_{n-2}\;,\] 
\[b_{1 \dots n-2}=b_{1\dots n-3}+b_{n-2}-2b_{1 \dots n-3}b_{n-2}\;,\] 
$s_{n-2}$ can be rewritten as
\begin{gather*}
2a_{1\dots n-3}^2+2a_{n-2}^2+2b_{1\dots n-3}b_{n-2}+4a_{1\dots n-3}a_{n-2} \\ 
-4a_{1\dots n-3}b_{1\dots n-3}b_{n-2}-4a_{n-2}b_{1\dots n-3}b_{n-2}\\
-(b_{1\dots n-3}+b_{n-2}-2b_{1\dots n-3}b_{n-2})(2a_1-b_1) \\
-\sum_{i=1}^{n-4}\left(\sum_{j=1}^{i}(2a_{j}-b_{j})+b_{1 \dots n-3}\right)(2a_{i+1}-b_{i+1})\\
-\sum_{i=1}^{n-3}(b_{n-2}-2b_{1\dots n-3}b_{n-2})(2a_{i+1}-b_{i+1})\\
-\left(\sum_{j=1}^{n-3}(2a_{j}-b_{j})+b_{1 \dots n-3}\right)(2a_{n-2}-b_{n-2})\\
=s_{n-3}+2a_{n-2}^2+2b_{1\dots n-3}b_{n-2}+4a_{1\dots n-3}a_{n-2} \\
-4a_{1\dots n-3}b_{1\dots n-3}b_{n-2}-4a_{n-2}b_{1\dots n-3}b_{n-2}\\
-(b_{n-2}-2b_{1\dots n-3}b_{n-2})\left(\sum_{i=1}^{n-2}2a_i-b_i \right)-\left( \sum_{j=1}^{n-3}(2a_j-b_j)+b_{1\dots n-3}\right)(2a_{n-2}-b_{n-2})\\
=s_{n-3}+2a_{n-2}^2-2a_{n-2}b_{n-2}+b_{n-2}+2a_{n-2}\left(2a_{1\dots n-3}-\sum_{j=1}^{n-3}(2a_j-b_j)\right)\\
+b_{1\dots n-3}b_{n-2}-2a_{n-2}b_{1\dots n-3}\\
=s_{n-3}+2a_{n-2}^2-2a_{n-2}b_{n-2}+b_{n-2}-b_{1\dots n-3}b_{n-2}\;,
\end{gather*}
where at the last equality the identity $\sum_{j=1}^{n-3}(2a_j-b_j)=z_{1\dots n-3}=2a_{1\dots n-3}-b_{1\dots n-3}$ was used.

It can be checked that $s_1=2a_{1}^2-2a_{1}b_{1}+b_{1}$, so induction shows that
\[s_{n-2}=\sum_{i=1}^{n-2} (2a_i^2-2a_ib_i+b_i+b_{1\dots i-1}b_{i})\;.\]
It remains to show that 
\[\sum_{i=2}^{n-2}b_{1\dots i-1}b_{i}=r_{1\dots n-2}\;,\]
which requires another induction argument, and is left to the reader.

(3): Apply Lemma \ref{lem:abc} on
\[\sum_{i=1}^n m_i = -b_{1\dots n-2}-\sum_{i=1}^{n-2}(2a_{1 \dots i-1}-c_{1 \dots i-1}b_{i \dots n-2})-2a_{1\dots n-2}-(n-1)c_{1\dots n-2}(z_{n-1}+z_{n}).\]

\end{proof}

\bibliographystyle{amsplain}
\bibliography{dncomb}

\end{document}